\documentclass[11pt,reqno]{amsart}

\usepackage[utf8]{inputenc}
\usepackage[english]{babel}
\usepackage{amsmath,amssymb,amsthm,mathtools}
\usepackage{xcolor}
\usepackage{geometry}
\usepackage{lmodern}

\DeclareMathOperator{\g}{\textsl{g}} 
\newcommand{\inner}[2]{\langle #1,#2\rangle} 
\newcommand{\tr}{\operatorname{tr}}
\renewcommand{\det}{\operatorname{det}}

\newcommand{\Aut}{\operatorname{Aut}}
\newcommand{\GL}{\operatorname{GL}}

\newcommand{\Sym}{\operatorname{Sym}}
\newcommand{\Herm}{\operatorname{Herm}}
\newcommand{\SO}{\operatorname{SO}}
\newcommand{\OO}{\operatorname{O}}

\newcommand{\Fix}{\operatorname{Fix}}
\newcommand{\Argmin}{\operatorname*{arg\,min}}
\newcommand{\bR}{\mathbb{R}}
\newcommand{\bC}{\mathbb{C}}

\newcommand{\gm}{\mathbin{\#}} 
\newcommand{\sgm}{\mathbin{\natural}} 

  \newtheorem{thmalph}{Theorem}

\newtheorem{theorem}{Theorem}[section]
\newtheorem{proposition}[theorem]{Proposition}
\newtheorem{lemma}[theorem]{Lemma}
\newtheorem{corollary}[theorem]{Corollary}
\newtheorem{definition}[theorem]{Definition}
\newtheorem{remark}[theorem]{Remark}
\newtheorem{example}[theorem]{Example}

\numberwithin{equation}{section}

 \usepackage[pagebackref,  colorlinks=true,  citecolor=cyan,
 citebordercolor={0 .255 .255},
  linkbordercolor={0 .255 .255},
  linkcolor=cyan
 ]{hyperref}

 \usepackage{cite}

 \def\equationautorefname~#1\null{(#1)\null}

\usepackage{filemod}

\title[Mostow's decompositions for symmetric cones]{ 
 Mostow-Type Decompositions and Geometric Means \\
 for Symmetric Cones
}
\author{Khalid Koufany}
\address{Universit\'e de Lorraine, CNRS, IECL, F-54000 Nancy, France}
\email{khalid.koufany@univ-lorraine.fr}
\author{Yongdo Lim}
\address{Department of Mathematics, Sungkyunkwan University, Suwan 440-746, Republic of Korea}
\email{ylim@knu.ac.kr}

\subjclass[2020]{15B48, 17Cxx, 22Exx, 53C23, 53C35, 54E35}
 
\keywords{Mostow's decomposition, symmetric cone, geometric means}

\begin{document}

\begin{abstract}
Motivated by Mostow's decomposition theorem for positive definite matrices and by subsequent matrix factorizations involving geometric means, we establish a Mostow-type decomposition for arbitrary symmetric cones. As a consequence, we derive an analogous decomposition for the automorphism group of the cone. We also introduce a natural Hadamard metric on the symmetric cone by pulling back an $\ell^2$-product metric, together with its midpoint operation and associated Karcher mean.
\end{abstract}

\maketitle

\tableofcontents

\section{Introduction}

Mostow's 1955 decomposition theorem \cite[Theorem 5]{Mostow} may be viewed as a refinement of the polar decomposition for semisimple Lie groups. The following formulation is particularly close to the setting of this paper.
\begin{thmalph}[Mostow]\label{th:Mostow-group}
Let $G$ be a connected semisimple Lie group with Lie algebra $\mathfrak g$, let $\mathfrak{g}=\mathfrak{k}\oplus\mathfrak{p}$ be a Cartan decomposition, and let $K$ be the analytic subgroup of $G$ with Lie algebra $\mathfrak k$.
Suppose that $\mathfrak h\subset \mathfrak p$ is a subspace satisfying
\begin{equation*} 
[X,[X,Y]]\in \mathfrak h, \quad \forall X,Y\in \mathfrak h.
\end{equation*}
If $\mathfrak m=\mathfrak h^\perp$ denotes the orthogonal complement of $\mathfrak h$ with respect to the Killing form, then the multiplication map
$$
K\times \exp(\mathfrak m)\times \exp(\mathfrak h)\to G
$$
is a homeomorphism.
\end{thmalph}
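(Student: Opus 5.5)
Using the Cartan decomposition $G=K\exp(\mathfrak p)$, with $K\times\mathfrak p\to G$, $(k,X)\mapsto k\exp X$, a homeomorphism, the statement is equivalent to the following. The map
\[
\Phi:\mathfrak m\times\mathfrak h\to P:=\exp(\mathfrak p),\qquad (Y,X)\mapsto \pi\bigl(\exp Y\exp X\bigr),
\]
must be a homeomorphism, where $\pi$ is the projection onto the $P$-factor. It is convenient to use the symmetric space $P\cong G/K$ with $g\cdot p=gp\,g^{\theta}{}^{-1}$, where $\theta$ is the Cartan involution. Writing $g=k\exp Y\exp X$, we get $g^{*}g=\exp X\exp(2Y)\exp X$ with $g^{*}=\theta(g)^{-1}$. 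So the claim becomes: every $p\in P$ is uniquely of the form $p=\exp X\,\exp(2Y)\,\exp X$ with $X\in\mathfrak h$ and $Y\in\mathfrak m$, and the pair depends continuously on $p$.

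\emph{Step 1: geometric meaning of the hypothesis.} The Lie triple condition $[X,[X,Y]]\in\mathfrak h$ (polarized, it gives $[\mathfrak h,[\mathfrak h,\mathfrak h]]\subset\mathfrak h$) makes $H:=\exp(\mathfrak h)\subset P$ a totally geodesic, closed, geodesically convex submanifold of the Hadamard manifold $P$ with its $G$-invariant metric. Its tangent space at $e$ is $\mathfrak h$, and its normal space there is $\mathfrak m$. By invariance, the normal space at $a=\exp X$ is $a^{1/2}\cdot\mathfrak m=\exp(X/2)\,\mathfrak m\,\exp(X/2)$, suitably exponentiated. The normal geodesic from $a$ in direction $Y$ therefore ends at $\exp(X/2)\exp(2Y)\exp(X/2)$. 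After reparametrizing $X\mapsto 2X$, the required statement is that the normal exponential map
\[
\nu:N H\to P,\qquad (a,Y)\mapsto a^{1/2}\exp(2Y)a^{1/2},\quad a\in H,\ Y\in\mathfrak m,
\]
is a homeomorphism.

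\emph{Step 2: existence and uniqueness via nearest-point projection.} $P$ is complete and simply connected with nonpositive curvature, and $H$ is closed and convex. Hence for each $p\in P$ there is a unique nearest point $a=\pi_H(p)\in H$, and the map $\pi_H$ is $1$-Lipschitz. The geodesic from $a$ to $p$ is orthogonal to $H$ at $a$ by first variation, which gives existence: $p=\nu(a,Y)$ for some $Y$. For uniqueness, suppose $p=\nu(a',Y')$ with $a'\in H$. Then the geodesic from $a'$ to $p$ meets $H$ orthogonally. In a CAT(0) space, comparison (or convexity of $t\mapsto d(p,\gamma(t))^2$ along geodesics $\gamma$ in $H$) shows that such a foot point is the unique minimizer, so $a'=a$. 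Moreover $Y'$ is determined by $\log$ of $a^{-1/2}pa^{-1/2}$, since $\exp$ is injective on $\mathfrak p$. Continuity of the inverse follows from continuity of $\pi_H$ together with continuity of the map $p\mapsto \tfrac12\log(a^{-1/2}pa^{-1/2})$. The forward map is obviously continuous.

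\emph{Main obstacle.} The delicate step is Step 1. One must verify that $\exp(\mathfrak h)$ is indeed totally geodesic and closed, i.e.\ that the Lie triple system property yields $\exp(\mathfrak h)=H_0\cdot e$ for the subgroup $H_0$ generated by $\mathfrak h+[\mathfrak h,\mathfrak h]$. One must also verify that the normal space is exactly the Killing-orthogonal complement transported by $a^{1/2}$, meaning the Riemannian metric at $e$ is a multiple of the Killing form on $\mathfrak p$. I would first check this via $\theta$-stability of $\mathfrak h+[\mathfrak h,\mathfrak h]$ and the standard correspondence between Lie triple systems and totally geodesic submanifolds through $e$.
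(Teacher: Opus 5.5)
Your proof is correct, but it cannot be matched against a proof in the paper: the paper quotes Theorem~\ref{th:Mostow-group} from Mostow and does not prove it. What it does prove is the Jordan-algebraic analogue, Theorem~\ref{thm:factorization}, together with its group version in Section~\ref{sec:MostowG}.

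\textbf{Where the routes agree.} Your passage from $G$ to the symmetric space is the same as the paper's group-level step. The paper's main proof uses $gg^*$, and the subsequent remark uses $g^*g$ for the ordering $K\exp(\mathfrak m)\exp(\mathfrak h)$. In both cases one applies the polar decomposition and observes that $g^*g=\exp X\exp(2Y)\exp X$ plays the role of $P(P(a^{1/2})b)=P(a^{1/2})P(b)P(a^{1/2})$.

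\textbf{Where the routes diverge.} The difference is in the factorization of the symmetric space itself. The paper is algebraic. It writes the inverse in closed form, $a=x\gm\sigma(x)$ and $b=x\sgm\sigma(x^{-1})$. Uniqueness comes from the uniqueness of the solution of the Riccati equation $P(a)x^{-1}=\sigma(x)$, and real analyticity of the inverse comes for free. However, everything rests on $\mathfrak h$ being the fixed space of an involution (Loos's case), so that $\exp(\mathfrak m)$ is itself the fixed-point set of the isometry $\sigma\circ j$.

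Your argument has four parts: nearest-point projection onto the closed convex totally geodesic $H=\exp(\mathfrak h)$, first variation for existence, strict convexity of $d(p,\cdot)^2$ along geodesics of $H$ for uniqueness of the foot point, and the $1$-Lipschitz property of the projection for continuity. It uses only the Lie triple condition, which is the real generality of Theorem~\ref{th:Mostow-group}. It never needs $\exp(\mathfrak m)$ to be totally geodesic, and in general it is not. The price is an implicit inverse and, as written, only a homeomorphism. For a diffeomorphism, add that the normal exponential map of a totally geodesic submanifold of a Hadamard manifold has no focal points. The two approaches meet in Proposition~\ref{prop:projection}: when $H$ is the fixed-point set of an isometric involution $\sigma$, your $\pi_H(x)$ is the midpoint $x\gm\sigma(x)$, which is exactly the paper's first factor.

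\textbf{Your ``main obstacle''.} This step is routine.
\begin{itemize}
\item Define the metric by the Killing form on $\mathfrak p$. It is $\mathrm{Ad}(K)$-invariant and positive definite there, so nothing about multiples needs checking.
\item Read $\mathfrak m$ as the Killing complement of $\mathfrak h$ inside $\mathfrak p$.
\item To justify ``by invariance'', write the transvection $p\mapsto qpq$, with $q=a^{1/2}\in H$, as $s_q\circ s_e$, where $s_q(p)=qp^{-1}q$ is the geodesic symmetry at $q$. Each $s_q$ with $q\in H$ preserves the complete totally geodesic $H$. Hence this transvection maps $H$ to itself and sends $e$ to $a$, and its differential sends $\mathfrak m=N_eH$ onto $N_aH$.
\end{itemize}
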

Thus, Mostow's theorem splits the noncompact factor in the polar decomposition according to a Lie triple system and its orthogonal complement.   The theorem has since been revisited and extended in several geometric and analytic settings.   Loos proved a  closed result  in \cite{Loos}, where $\mathfrak{h}$ is $+1$-eigenspace of an involutive automorphism compatible with the Cartan decomposition.
Heinzner and Schwarz \cite{HeinznerSchwarz} obtained new proofs and extensions using moment map methods, while Porta and Recht \cite{PortaRecht} established an operator  analogue via conditional expectations. Further variants in nonpositively curved and infinite-dimensional settings were developed by Andruchow and Larotonda \cite{AndruchowLarotonda}, Conde and Larotonda \cite{CondeLarotonda}, Miglioli \cite{Miglioli}, and Tumpach \cite{TumpachLstar}.

For the cone $\Sym^{++}(n,\bR)$ of real symmetric positive definite matrices, Mostow's theorem takes the following concrete form; see Mostow \cite[Theorem 3]{Mostow}, Borel \cite[Proposition 1]{Borel}, or Lang \cite[Theorems 3.6 and 3.7, Chap.~XII]{Lang}.

\begin{thmalph}[Mostow]\label{MostowThm}
Let $E$ be a real subspace of $\Sym(n,\mathbb R)$. Then $\exp(E)$ is a totally geodesic and geodesically convex submanifold of $\Sym^{++}(n,\mathbb R)$ if and only if
\begin{equation}\label{eq:triple-system}
[X,[X,Y]]\in E, \quad  \forall  X,Y\in E.
\end{equation}
In this case every $X\in\Sym^{++}(n,\mathbb R)$ admits a unique factorization
\begin{equation}\label{eq:Mostow-Intro}
X=ABA,
\end{equation}
where $A\in\exp(E)$ and $B\in\exp(E^\perp)$, with $E^\perp$ taken with respect to the trace inner product. Moreover, the metric projection of $X$ onto $\exp(E)$ exists and is unique.
\end{thmalph}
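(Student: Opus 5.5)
Throughout, $P=\Sym^{++}(n,\bR)$ carries the trace metric $\langle U,V\rangle_X=\tr(X^{-1}UX^{-1}V)$. This is a Hadamard manifold with geodesics $\gamma(t)=X^{1/2}(X^{-1/2}YX^{-1/2})^tX^{1/2}$. The congruences $X\mapsto gXg^T$ are isometries, and the geodesic symmetry at $I$ is $X\mapsto X^{-1}$. I will prove Theorem~\ref{MostowThm} in three steps: first the characterization of totally geodesic flats $\exp(E)$, then existence and uniqueness of the metric projection, and finally the factorization obtained from the first-order condition at the projection.

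\emph{Step 1 (characterization).} Identify $\Sym(n,\bR)$ with $\mathfrak p$ in $\mathfrak{gl}(n,\bR)=\mathfrak{so}(n)\oplus\Sym(n,\bR)$. The curvature tensor of $P$ at $I$ is $R(X,Y)Z=-[[X,Y],Z]$ up to a positive constant. By Cartan's theory of symmetric spaces, $\exp(E)$ is a complete totally geodesic submanifold exactly when $E$ is a Lie triple system, $[[X,Y],Z]\in E$ for $X,Y,Z\in E$. By polarization this is equivalent to \eqref{eq:triple-system}: expanding $[X+Z,[X+Z,Y]]$ gives $[X,[Z,Y]]+[Z,[X,Y]]\in E$, and together with the Jacobi identity this recovers $[[X,Y],Z]\in E$. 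Geodesic convexity is then automatic. Indeed, a complete totally geodesic submanifold of a Hadamard manifold is closed, and it contains the unique geodesic between any two of its points, since such a geodesic lies in the submanifold by total geodesy and uniqueness. For the converse direction, if $\exp(E)$ is totally geodesic, then the second fundamental form vanishes at $I$. Using the isometries $X\mapsto AXA$ for $A\in\exp(E)$, which preserve $\exp(E)$ because $A\exp(tY)A\in\exp(E)$ for $A=\exp(sY)$, the tangent space at every point is a translate of $E$, and the standard argument gives the triple-system condition.

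\emph{Step 2 (projection).} In a Hadamard manifold, the distance to a closed geodesically convex set is attained at a unique point $\pi(X)$; this follows from strict convexity of $d(X,\cdot)^2$ along geodesics. Let $A_0=\pi(X)$, and write $A_0=A^2$ with $A=\exp(Z/2)\in\exp(E)$, where $A_0=\exp(Z)$. The isometry $Y\mapsto A^{-1}YA^{-1}$ preserves $\exp(E)$ by the triple-system property; this uses the fact that for $Z\in E$, $e^{Z/2}e^{W}e^{Z/2}\in\exp(E)$, which is the geodesic-symmetry closure of a totally geodesic submanifold through $I$. It therefore carries $A_0$ to $I$ and $X$ to $B:=A^{-1}XA^{-1}$, and $I=\pi(B)$. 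The first-order condition at the foot point says that the initial velocity $\log B$ of the geodesic from $I$ to $B$ is orthogonal to $T_I\exp(E)=E$. Hence $\log B\in E^\perp$ and $X=ABA$ with $B\in\exp(E^\perp)$.

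\emph{Step 3 (uniqueness) and main obstacle.} Suppose $X=ABA$ with $A\in\exp(E)$ and $B=\exp(W)$, $W\in E^\perp$. Applying the isometry $Y\mapsto A^{-1}YA^{-1}$ reduces to showing that $I$ is the projection of $B$. The geodesic $t\mapsto\exp(tW)$ from $I$ to $B$ leaves $\exp(E)$ orthogonally. By convexity of $\exp(E)$ and the Hadamard comparison (the obtuse-angle criterion, equivalently convexity of $t\mapsto d(\gamma(t),B)^2$ along any geodesic $\gamma$ in $\exp(E)$ starting at $I$), we get $d(B,Y)\ge d(B,I)$ for all $Y\in\exp(E)$. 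So $I=\pi(B)$, and therefore $A^2=\pi(X)$. Since square roots in $P$ are unique, $A$ is unique, and then so is $B$. Continuity of the inverse map follows from continuity of $\pi$, which is nonexpansive on Hadamard manifolds.

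I expect the main obstacle to be Step 1: the verification that \eqref{eq:triple-system} alone gives invariance of $\exp(E)$ under $Y\mapsto AYA$ and total geodesy, and the converse. My first approach would be to show that $e^{Z}e^{W}e^{Z}$ has logarithm in $E$ by differentiating $F(t)=\log(e^{tZ}e^{W}e^{tZ})$. The ODE for $F$ involves the operators $\mathrm{ad}_Z$ and $\mathrm{ad}_{F}$ only through even powers $(\mathrm{ad}\,\cdot)^2$, which map $E$ into $E$ under \eqref{eq:triple-system}. Alternatively, I would invoke Theorem~\ref{th:Mostow-group} structure for $G=\GL(n,\bR)^0$, reading the decomposition $g=k\,e^{m}e^{h}$ applied to $g=X^{1/2}$ to give $X=g^Tg=e^{h}e^{2m}e^{h}$. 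This yields existence and uniqueness of \eqref{eq:Mostow-Intro} directly from the homeomorphism, and it would serve as a cross-check of Steps 2 and 3.
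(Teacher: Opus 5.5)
Your proof is correct, but the paper gives no proof of Theorem~\ref{MostowThm} to compare it with; it quotes the result from Mostow, Borel and Lang. The paper's own counterpart is Theorem~\ref{thm:factorization} together with Proposition~\ref{prop:projection}. These cover only the case where $E=V^+$ and $E^\perp=V^-$ are the $\pm1$-eigenspaces of an involutive Jordan automorphism $\sigma$, and the argument there is algebraic:
\begin{itemize}
\item $A^2$ is the unique positive solution $Y$ of the Riccati equation $YX^{-1}Y=\sigma(X)$, i.e.\ $A^2=X\gm\sigma(X)$;
\item the relation $\sigma(B)=B^{-1}$ for $B=A^{-1}XA^{-1}$ is checked by hand;
\item uniqueness reduces to uniqueness of the Riccati solution;
\item the projection property follows from the midpoint argument with the isometry $\sigma$, which fixes $\exp(E)$ pointwise.
\end{itemize}
This gives closed formulas, including $B=X\sgm\sigma(X^{-1})$. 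It requires $E$ to be the fixed space of an involutive automorphism, and then $E^\perp$ is automatically a Lie triple system too. A general Lie triple system has neither property. A line $E=\bR H$ with $H\notin\bR I$ is never such a fixed space, since those contain $I$, and its complement need not be a Lie triple system.

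Your route handles arbitrary $E$ by replacing the algebra with Hadamard geometry. You combine existence of the projection onto a closed convex set, invariance of $\exp(E)$ under $Y\mapsto AYA$ to move the foot point to $I$, and the first-order condition there to get the factorization. The converse orthogonality/convexity criterion then gives uniqueness. The price is that the argument is non-constructive ($A$ is only characterized as $\pi(X)^{1/2}$) and relies on Cartan's correspondence. Both proofs share the key step of identifying the squared $\exp(E)$-factor with the metric projection; in the paper this is $q_\sigma^+=\pi_\sigma^+$.

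One justification needs repair. In the converse part of Step 1, the fact that $A\exp(tY)A\in\exp(E)$ for $A=\exp(sY)$ only shows that translation by $\exp(sY)$ preserves the single geodesic $t\mapsto\exp(tY)$. It does not show that $\exp(E)$ is preserved. You do not need this there anyway. Tangent spaces of a totally geodesic submanifold are invariant under the ambient curvature tensor, and at $I$ this says exactly $[[X,Y],Z]\in E$.

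The invariance you really use in Steps 2 and 3 is correctly obtained from the geodesic symmetry $Y\mapsto AY^{-1}A$ at the point $A\in\exp(E)$ (not at $I$). Applied to $e^{-W}$, it gives $Ae^{W}A\in\exp(E)$.

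Your ODE alternative also works and avoids Cartan altogether. For $F(t)=\log(e^{tZ}e^{W}e^{tZ})$ one finds $F'=\operatorname{ad}_F\coth(\tfrac12\operatorname{ad}_F)\,Z$. This is an even function of $\operatorname{ad}_F$ (not of $\operatorname{ad}_Z$) applied to $Z$. So \eqref{eq:triple-system} alone, without polarization, keeps $F(t)$ in $E$.
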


This theorem shows that the geometry of totally geodesic submanifolds in the Cartan--Hadamard manifold $\Sym^{++}(n,\mathbb R)$ is closely related to canonical factorizations of positive definite matrices. The condition \eqref{eq:triple-system} is equivalent to $E$ being a Lie triple system and also to the closure condition
$$
efe\in \exp(E), \quad \forall e,f\in \exp(E).
$$
This viewpoint
has been revisited from matrix theory  perspectives; see,
for instance, \cite{Larotonda,TumpachLarotonda,Lim,Bhatia1}. In
particular, Lim \cite{Lim} showed that each Hermitian unitary matrix
induces a factorization of the cone of positive definite Hermitian
matrices into geodesically convex components. This decomposition is
naturally equipped with a Hadamard metric, for which one obtains
explicit midpoint formulas expressed in terms of both the metric
geometric mean and the spectral geometric mean. From a complementary
perspective, Bhatia \cite{Bhatia1} introduced the bipolar
decomposition, which provides an analogous factorization framework
for complex nonsingular matrices and highlights the underlying
geometric structure. Further developments include perturbative
analyses of Mostow's decomposition by Grover and Mishra
\cite{GroverMishra}, who established bounds quantifying its
stability under perturbations. In a different direction, Barbaresco
\cite{Barbaresco} applied Mostow's decomposition to the study of
fibrations of the unit Siegel disk, with applications to radar
space-time signal processing.

The natural setting for extending these ideas beyond matrices is the theory of symmetric cones. By the Koecher--Vinberg theorem, symmetric cones are precisely the cones of invertible squares in Euclidean Jordan algebras \cite{Koecher,Vinberg,FK}. Each symmetric cone carries a canonical invariant Riemannian metric. Lim introduced the metric geometric mean on symmetric cones \cite{LimGM}, and Lee and Lim later developed the spectral geometric mean \cite{LeeLim}. Symmetric cones therefore provide a natural Jordan algebra framework for a Mostow-type factorization.

In our setting, the role of Mostow's pair $(E,E^\perp)$ is played by the $\pm1$-eigenspace decomposition associated with an involutive Jordan automorphism. More precisely, let $V$ be a simple Euclidean Jordan algebra, let $\Omega$ be its symmetric cone, and let $\sigma$ be an involutive Jordan automorphism of $V$. We consider the fixed-point cone
$
\Omega_\sigma^+ = \{a\in\Omega:\sigma(a)=a\}
$
and the twisted fixed-point set
$
\Omega_\sigma^- = \{b\in\Omega:\sigma(b)=b^{-1}\}.
$
Our main theorem, see Theorem \ref{thm:factorization}, shows that the map
$$
f_\sigma:\Omega_\sigma^+\times \Omega_\sigma^- \to \Omega,
\qquad
f_\sigma(a,b)=P(a^{1/2})\,b,
$$
is a real-analytic diffeomorphism, with inverse
$$
x \longmapsto   \bigl(x\gm \sigma(x),x \sgm \sigma(x^{-1})\bigr),
$$
where $P(x)$ denotes the quadratic representation of $V$, while $a\gm b$ and $a\sgm b$ denote the metric and spectral geometric means, respectively. Thus, every $x\in \Omega$ admits a unique factorization
$$
x=P(a^{1/2})\,b,
\qquad
a\in\Omega_\sigma^+,\quad b\in\Omega_\sigma^-.
$$
This generalizes Theorem~\ref{MostowThm} to involutive Euclidean Jordan algebras. The decomposition also has a natural Riemannian interpretation: $\Omega_\sigma^+$ is the fixed-point set of the isometry $\sigma$, whereas $\Omega_\sigma^-$ is the fixed-point set of the isometry $\sigma\circ j$, where $j(x)=x^{-1}$. Consequently, both $\Omega_\sigma^+$ and $\Omega_\sigma^-$ are closed, totally geodesic submanifolds of the Hadamard manifold $\Omega$.

Let $G=G(\Omega)_0$ be the identity component of the cone automorphism group
$$
G(\Omega):=\{g\in\GL(V):g(\Omega)=\Omega\},
$$
and let $K=\Aut(V)_0$ be the identity component of the Jordan automorphism group. As a consequence of the Mostow decomposition of $\Omega$, we prove that the map
$$
 \Omega_\sigma^+\times \Omega_\sigma^-\times K\to G, \quad (a,b,k)\mapsto P(a^{1/2})P(b^{1/2})k
$$
is a real analytic diffeomorphism, with inverse
$$
g\mapsto
\Bigl(
x\gm\sigma(x),\,
x\sgm\sigma(x^{-1}),\,
P\bigl((x\sgm\sigma(x^{-1}))^{-1/2}\bigr)\,
P\bigl((x\gm\sigma(x))^{-1/2}\bigr)\,g
\Bigr),
$$
where $x=((gg^*)e)^{1/2}$.
This, in turn, extends Theorem~\ref{th:Mostow-group} to the reductive Lie group $G$.

Once the factorization is established, the geometry of $\Omega$ can be expressed in product form. Because $\Omega_\sigma^\pm$ are Hadamard spaces with their induced metrics, pulling back the $\ell^2$-product metric on $\Omega_\sigma^+\times \Omega_\sigma^-$ defines a new Hadamard metric $d_\sigma$ on $\Omega$. Its midpoint map and associated Karcher mean, defined as the unique minimizer of the sum of squared distances, are obtained by combining the corresponding constructions on the two factors componentwise. This part of the paper follows the barycentric viewpoint introduced by Karcher \cite{Karcher}. 

The paper is organized as follows. Section~\ref{sec:background} recalls the basic facts about Euclidean Jordan algebras and symmetric cones, and Section~\ref{sec:means} reviews the metric and spectral geometric means. In Section~\ref{sec:involutions}, we study involutive automorphisms and the geometry of $\Omega_\sigma^\pm$. Section~\ref{sec:factorization} contains the factorization theorem, the Mostow-type decomposition, and the metric projection formulas. Section~\ref{sec:MostowG} proves a Mostow decomposition for the identity component of the cone automorphism group. Sections~\ref{sec:l2metric} and~\ref{sec:karcher} introduce the induced $\ell^2$-metric and its associated Karcher mean.  
Finally, Section~\ref{sec:examples} treats the Hermitian, real symmetric, and Lorentz cones explicitly; in the Lorentz cone case, we obtain closed formulas for both geometric means.

\section{Symmetric cones and Euclidean Jordan algebras}\label{sec:background}

We briefly recall the Jordan algebra description of symmetric cones, referring to Faraut--Kor\'anyi \cite{FK} for proofs and further background.

Let $V$ be a finite-dimensional real Euclidean vector space with inner product $x,y \mapsto \inner{x}{y}$. A convex cone $\Omega \subset V$ is called \emph{pointed} if $\overline{\Omega}\cap (-\overline{\Omega})=\{0\}$. The group of linear automorphisms preserving $\Omega$ is
$$
G(\Omega)=\{g\in \GL(V): g(\Omega)=\Omega\}.
$$
It is a closed subgroup of the linear group $\GL(V)$ and hence a Lie group.
The cone $\Omega$ is called \emph{homogeneous} if $G(\Omega)$ acts transitively on $\Omega$, and \emph{self-dual} if the dual cone
$$
\Omega^* := \{x\in V : \inner{x}{y}>0 \text{ for all } y\in \overline{\Omega}\setminus \{0\}\}
$$
coincides with $\Omega$. A cone that is both homogeneous and self dual is called a \emph{symmetric cone}.

A \emph{Euclidean Jordan algebra} is a finite-dimensional Euclidean vector space $V$ endowed with a bilinear product $(x,y)\mapsto xy$ satisfying
$$
xy=yx,\qquad x(x^2y)=x^2(xy),\qquad \inner{xy}{z}=\inner{y}{xz}
$$
for all $x,y,z\in V$. By the Koecher--Vinberg theorem, if $V$ is a Euclidean Jordan algebra with unit element $e$, then
$$
\Omega = \operatorname{int}\{x^2 : x\in V\}
$$
is a symmetric cone; conversely, every symmetric cone arises this way \cite{FK,Koecher,Vinberg}.

For $x\in V$, let $L(x)$ and $P(x)$ denote the linear maps
$$
L(x)y = xy,
\qquad
P(x)=2L(x)^2-L(x^2).
$$
These operators are self adjoint with respect to the trace inner product. In particular, if $x\in\Omega$, then $P(x)$ is a positive definite linear operator on $V$. The operator $P(x)$ is called the \emph{quadratic representation}. Let $\Aut(V)$ denote the group of Jordan automorphisms of $V$. For $g\in G(\Omega)$, one has $g\in\Aut(V)$ if and only if $g(e)=e$. Moreover, every $g\in G(\Omega)$ can be written as $g=P(x)k$, with $x\in\Omega$ and $k\in\Aut(V)$.

From now on, we assume that $V$ is simple, meaning that it has no nontrivial ideals; consequently, $\Omega$ is irreducible.

An element $c\in V$ is an idempotent if $c^2=c$, and is primitive if it cannot be written as a sum of two nonzero idempotents. The rank of $V$ is the maximal number of pairwise orthogonal primitive idempotents. A \emph{Jordan frame} is a family $(c_1,\dots,c_r)$ of pairwise orthogonal primitive idempotents with $c_1+\cdots+c_r=e$.

Every $x\in V$ admits a spectral decomposition
$$
x = k\Bigl(\sum_{j=1}^r \lambda_j c_j\Bigr),
\qquad
k\in \Aut(V),\quad \lambda_j\in \bR.
$$
The numbers $\lambda_1,\dots,\lambda_r$ are the \emph{spectral values} of $x$. They define the Jordan \emph{determinant} and \emph{trace} by
$$
\det(x)=\prod_{j=1}^r \lambda_j,
\qquad
\tr(x)=\sum_{j=1}^r \lambda_j.
$$
The canonical associative inner product is
\begin{equation}\label{eq:associative-ip}
\inner{x}{y}=\tr(xy).
\end{equation}
If $x\in \Omega$, then all spectral values of $x$ are positive, and one defines
$$
x^\alpha = \sum_{j=1}^r \lambda_j^\alpha c_j,
\qquad
\log x = \sum_{j=1}^r (\log \lambda_j)c_j,
\qquad
\alpha\in \bR.
$$
The trace inner product is associative, and every Jordan automorphism is orthogonal with respect to it.

If $c\in V$ is an idempotent, then $L(c)$ is diagonalizable with eigenvalues $0$, $\frac12$, and $1$. Accordingly one has the Peirce decomposition
$$
V = V_1(c)\oplus V_{1/2}(c)\oplus V_0(c),
\qquad
V_\alpha(c)=\{x\in V : cx=\alpha x\}.
$$

An element $x\in V$ is invertible if there exists $y$ in the Jordan subalgebra generated by $\{x,e\}$ such that $xy=e$. The inverse is unique and is denoted by $x^{-1}$. Moreover, $x$ is invertible if and only if $\det x\ne0$. The set of invertible elements, denoted by $V^\times$, is open and dense in $V$.

\begin{example}{\rm
Two basic examples are the real symmetric matrices and the spin factors.

\begin{enumerate}
\item $V=\Sym(m,\bR)$ with Jordan product and the canonical inner product
$$
x\circ y = \frac12(xy+yx),\quad \inner{x}{y}=\tr(xy).
$$
In this case
$$
\Omega=\Sym^{++}(m,\bR),
$$
the cone of real symmetric positive definite matrices. The determinant and trace are the usual ones and
$$
P(x)y = xyx.
$$

\item $V=\bR\times \bR^n$ with product and the canonical inner product
$$
(\lambda,u)\circ (\mu,v) = \bigl(\lambda\mu+\inner{u}{v},\ \lambda v+\mu u\bigr),
\quad \inner{(\lambda,u)}{(\mu,v)}=\lambda\mu+\langle u,v\rangle,
$$
In this case,
$$
\Omega=\{(\lambda,u): \lambda>\|u\|\},
$$
the Lorentz cone. Here
$$
\tr(\lambda,u)=2\lambda,
\qquad
\det(\lambda,u)=\lambda^2-\|u\|^2,
\qquad
(\lambda,u)^{-1}=\frac{1}{\lambda^2-\|u\|^2}(\lambda,-u),
$$
and
$$
P(\lambda,u)(\mu,v)=\Bigl((\lambda^2+\|u\|^2)\mu+2\lambda\inner{u}{v},\ (\lambda^2-\|u\|^2)v+2(\lambda\mu+\inner{u}{v})u\Bigr).
$$
\end{enumerate}
}
\end{example}

The following standard facts will be used repeatedly.

\begin{proposition}[see {\cite[Propositions II.3.1, II.3.2, II.3.4]{FK}}]\label{prop:quadratic}
Let $V$ be a Euclidean Jordan algebra. Then:
\begin{enumerate}
\item $x\in V$ is invertible if and only if $P(x)$ is invertible. In this case
$$
P(x)x^{-1}=x,
\qquad
P(x)^{-1}=P(x^{-1}).
$$
\item The differential of the inversion map is
$$
D_v(x^{-1})=-P(x)^{-1}v.
$$
\item If $x,y\in V^\times$, then $P(x)y\in V^\times$ and
$$
\bigl(P(x)y\bigr)^{-1}=P(x^{-1})\,y^{-1}.
$$
\item One has the fundamental formula
$$
P(P(x)y)=P(x)\,P(y)\,P(x).
$$
\item For every $x\in V$,
$$
P(\exp x)=\exp\bigl(2L(x)\bigr).
$$
\end{enumerate}
\end{proposition}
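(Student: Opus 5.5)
The statement to be proved is the collection of standard facts in Proposition~\ref{prop:quadratic}, cited from Faraut--Kor\'anyi. The plan is to derive them in order from the definition $P(x)=2L(x)^2-L(x^2)$, using the fact that the subalgebra generated by $x$ and $e$ is associative (power associativity). The one genuinely nontrivial ingredient is the fundamental formula (4). Items (1), (3) and (5) follow from it, or from spectral considerations, once it is available.

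For (5), fix a Jordan frame $(c_j)$ in which $x=\sum\lambda_j c_j$. The operators $L(c_j)$ commute, and $V$ splits into joint Peirce spaces $V_{ij}$. On $V_{ij}$ the operator $L(x)$ acts by $(\lambda_i+\lambda_j)/2$, and $L(x^2)$ acts by $(\lambda_i^2+\lambda_j^2)/2$. Hence $P(x)$ acts by $\lambda_i\lambda_j$. Applying this to $\exp x$, whose spectral values are $e^{\lambda_j}$, gives $P(\exp x)=e^{\lambda_i+\lambda_j}=\exp(2L(x))$ on each $V_{ij}$.

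The same Peirce computation settles the first part of (1) for $x$ with a spectral decomposition. Since $P(x)$ acts on $V_{ij}$ by $\lambda_i\lambda_j$, it is invertible if and only if every $\lambda_j\neq 0$, that is, if and only if $\det x\neq0$, which the paper identifies with invertibility. It also gives $P(x)x^{-1}=\sum\lambda_j^2\lambda_j^{-1}c_j=x$ and $P(x)^{-1}=P(x^{-1})$, because $x^{-1}=\sum\lambda_j^{-1}c_j$ acts on $V_{ij}$ by $\lambda_i^{-1}\lambda_j^{-1}$.

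For (2), differentiate the identity $P(x)x^{-1}=x$ along $v$. Alternatively, differentiate $L(x)x^{-1}=e$ together with the operator commutation relations; the quadratic identity is cleaner. We use the polarization $P(x,v)=L(x)L(v)+L(v)L(x)-L(xv)$, for which $D_vP(x)=2P(x,v)$. Differentiating gives
\[
2P(x,v)x^{-1}+P(x)D_v(x^{-1})=v.
\]
Next we check that $P(x,v)x^{-1}=v$. Since $x^{-1}$ lies in the associative span of $x$, we have $x\cdot x^{-1}=e$. Moreover $L(x)$ and $L(x^{-1})$ commute, and the Jordan identity yields $L(x^{-1})L(x)v-\ldots$. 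The result is $D_v(x^{-1})=-P(x)^{-1}v$.

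The fundamental formula (4) is the main obstacle. I would try Macdonald's theorem: every identity in three variables of degree at most one in the third that holds in all special Jordan algebras holds in all Jordan algebras. Evaluated at an element $z$, formula (4) is of this type, and in the special case it reads $(xyx)z(xyx)=x(y(xzx)y)x$, which is trivial. If a self-contained argument is preferred, the route is as follows. First prove $P(x)L(x^{-1})=L(x)$ for invertible $x$, together with the commutation identities. Then differentiate $x\mapsto P(x)^{-1}$ along the curve $P(x)y$ and use that both sides of (4) transform identically. It suffices to treat invertible $x$ and $y$ and then extend by density of $V^\times$ and continuity.

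Finally, (3) follows from (1) and (4). By (4), $P(P(x)y)=P(x)P(y)P(x)$ is invertible, so $P(x)y\in V^\times$. The candidate inverse $w=P(x^{-1})y^{-1}$ then satisfies
\[
P(P(x)y)w=P(x)P(y)P(x)P(x)^{-1}y^{-1}=P(x)y,
\]
and uniqueness of inverses, through the characterization $P(z)z^{-1}=z$, identifies $w$ with $(P(x)y)^{-1}$.
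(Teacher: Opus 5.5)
Your proof is correct, but it takes a different route from the one behind the citation. The paper gives no proof of this proposition and simply refers to Faraut--Kor\'anyi. There, (1)--(4) are established algebraically in Chapter~II for arbitrary (not necessarily Euclidean) Jordan algebras. The fundamental formula is obtained by differentiating the inverse identity (3) in $y$, using (2), and then extending by density of $V^\times$.

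You instead exploit the Euclidean structure. On the joint Peirce spaces $V_{ij}$ of a Jordan frame diagonalizing $x$, $P(x)$ acts by $\lambda_i\lambda_j$, which gives (1) and (5) at once. You import (4) from Macdonald's theorem. You then reverse the usual order by deducing (3) from (1) and (4), via the clean observation that $P(z)w=z=P(z)z^{-1}$ with $P(z)$ invertible forces $w=z^{-1}$.

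Your route buys brevity and a transparent spectral picture of $P(x)$. It costs reliance on two substantial external inputs: the spectral and Peirce theory of a Euclidean Jordan algebra, and above all Macdonald's theorem, which is far deeper than the statement being proved. The Faraut--Kor\'anyi argument, by contrast, is elementary and valid in any Jordan algebra.

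Two points on the write-up.

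First, in (2) the only fact you need is $[L(x),L(x^{-1})]=0$. This holds because $x^{-1}$ lies in the subalgebra generated by $x$ and $e$, or because both operators are scalar on each $V_{ij}$. Then $P(x,v)x^{-1}=v+[L(x),L(x^{-1})]v=v$ directly, so the unfinished appeal to the Jordan identity can be dropped. You should also note that inversion is differentiable on the open set $V^\times$, since $x^{-1}=P(x)^{-1}x$ by (1).

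Second, your ``self-contained'' fallback for (4) does not stand on its own as sketched. The natural way to make it work is the Faraut--Kor\'anyi differentiation argument, and that needs the inverse formula (3) as input. In your scheme (3) is derived from (4), so this would be circular. Without Macdonald's theorem you would first need an independent proof of (3).
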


The symmetric cone $\Omega$ carries a natural $G(\Omega)$-invariant Riemannian metric
$$
\g_x(u,v)=\inner{P(x^{-1})u}{v},
\qquad
x\in \Omega,\quad u,v\in V. 
$$
The corresponding distance is
$$
d(a,b)=\Bigl(\sum_{i=1}^r \log^2 \lambda_i\Bigr)^{1/2}, 
$$
where $\lambda_1,\dots,\lambda_r$ are the spectral values of $P(a^{-1/2})b$.  

With respect to this metric, the inversion map $j:x\mapsto x^{-1}$ is an involutive isometry of $\Omega$ whose unique fixed point is $e$. Given $y\in\Omega$, choose $g\in G(\Omega)$ such that $y=ge$. Then $s_g:=g\circ j\circ g^{-1}$ is a symmetry of $\Omega$ with unique fixed point $y$. Hence $\Omega$ is a Riemannian symmetric space of noncompact type. More precisely, it can be identified with the homogeneous space
$$
\Omega \simeq G / K ,
$$
where $G$ denotes the identity component of $G(\Omega)$ and
$
K = \{g\in G : ge=e\}
$
is the stabilizer of $e$. In particular, $\Omega$ is a Cartan--Hadamard manifold, that is, a simply connected, complete Riemannian manifold with nonpositive sectional curvature.

We conclude this section with the classification of simple Euclidean Jordan algebras and their corresponding irreducible symmetric cones:

 \begin{center}
 \begin{tabular}{|c|c|c|c|}
\hline ${V}$ & $\Omega$ & $\mathfrak{g}$ & $\mathfrak{k}$ \\
\hline $\operatorname{Sym}(n, \mathbb{R})$ & $\operatorname{Sym}^{++}(n, \mathbb{R})$ & $\mathfrak{s l}(n, \mathbb{R}) \oplus \mathbb{R}$ & $\mathfrak{o}(\mathrm{n})$ \\
$\operatorname{Herm}(n, \mathbb{C})$ & $\operatorname{Herm}^{++}(n, \mathbb{C})$ & $\mathfrak{s l}(n, \mathbb{C}) \oplus \mathbb{R}$ & $\mathfrak{s u}(\mathrm{n})$ \\
$\operatorname{Herm}(n, \mathbb{H})$ & $\operatorname{Herm}^{++}(n, \mathbb{H})$ & $\mathfrak{s l}(m, \mathbb{H}) \oplus \mathbb{R}$ & $\mathfrak{s u}(n, \mathbb{H})$ \\
$\mathbb{R} \times \mathbb{R}^{n-1}$ & $\Lambda_n$ & $\mathfrak{o}(1, n-1) \oplus \mathbb{R}$ & $\mathfrak{o}(\mathrm{n}-1)$ \\
$\operatorname{Herm}(3, \mathbb{O})$ & $\operatorname{Herm}^{++}(3, \mathbb{O})$ & $\mathfrak{e}_{(-26)} \oplus \mathbb{R}$ & $\mathfrak{f}_4$ \\
\hline
\end{tabular}
 \end{center}
Here $\mathfrak{g}$ and $\mathfrak{k}$ are the Lie algebras of $G$ and $K$, respectively.
\section{The metric and spectral geometric means}\label{sec:means}

For positive definite matrices $A,B$, the Kubo--Ando geometric mean is
$$
A\gm B = A^{1/2}\bigl(A^{-1/2}BA^{-1/2}\bigr)^{1/2}A^{1/2}. 
$$
It is the unique positive definite solution of the Riccati equation
$$
XA^{-1}X=B. 
$$
See \cite{KuboAndo,FiedlerPtak,BhatiaPDM}.

For a general symmetric cone $\Omega$ and $a,b\in \Omega$, define
$$
a\gm b := P(a^{1/2})\Bigl(P(a^{-1/2})b\Bigr)^{1/2}. 
$$
More generally, for $t\in \bR$,
$$
a\gm_t  b := P(a^{1/2})\Bigl(P(a^{-1/2})b\Bigr)^t.
$$

\begin{proposition}[see {\cite{LimGM}}]\label{prop:geodesic}
For $a,b\in \Omega$, the curve
$$
\gamma(t)=a\gm_t   b
$$
is the unique geodesic joining $a$ and $b$. In particular, $a\gm b$ is the midpoint of the geodesic segment from $a$ to $b$.
\end{proposition}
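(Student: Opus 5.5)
The approach is to reduce to the base point $e$ by $G(\Omega)$-invariance and then use the fact that $\Omega$ is a Cartan--Hadamard manifold. First I check that $\gamma$ is a curve from $a$ to $b$. At $t=0$ we get $\gamma(0)=P(a^{1/2})e=a$. At $t=1$ we get $\gamma(1)=P(a^{1/2})P(a^{-1/2})b=b$, since $P(a^{1/2})^{-1}=P(a^{-1/2})$ by Proposition~\ref{prop:quadratic}(1). Next, $g:=P(a^{1/2})$ belongs to $G(\Omega)$, and the metric $\g$ is $G(\Omega)$-invariant. So $\gamma$ is the image under the isometry $g$ of the curve $\gamma_0(t)=c^t$, where $c=P(a^{-1/2})b\in\Omega$. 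It therefore suffices to show that $t\mapsto c^t=\exp(t\log c)$ is a geodesic through $e$.

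For this I would use the symmetric-space structure $\Omega\simeq G/K$. The Cartan involution at $e$ is induced by $j$, and its differential at $e$ is $-\mathrm{id}$ by Proposition~\ref{prop:quadratic}(2). The tangent space $T_e\Omega=V$ is identified with $\mathfrak p=\{L(x):x\in V\}$ via $L(x)\mapsto L(x)e=x$. In a symmetric space, geodesics through the base point are the orbits of one-parameter subgroups $\exp(tX)$ with $X\in\mathfrak p$. Taking $X=L(\tfrac12\log c)$, Proposition~\ref{prop:quadratic}(5) gives
$$
\exp\bigl(tL(\log c)/1\bigr)\cdot e \;\text{ computed as }\; P\bigl(\exp(\tfrac t2\log c)\bigr)e=\exp(\tfrac t2 \log c)^2=c^t .
$$
Hence $\gamma_0$ is a geodesic. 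As a cross-check, I could also verify the geodesic equation directly. Along $x(t)=c^t$, everything lies in the associative subalgebra generated by $c$, so the Christoffel term $-\tfrac12 DP(x)^{-1}$ contributions reduce to the scalar case $\ddot x - \dot x\,x^{-1}\dot x=0$. That equation is obviously satisfied by $c^t$ in a Jordan frame diagonalizing $c$.

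Uniqueness follows from the Cartan--Hadamard property stated at the end of Section~\ref{sec:background}. In a complete, simply connected manifold of nonpositive curvature, $\exp_a$ is a diffeomorphism, so any two points are joined by exactly one geodesic (up to affine reparametrization). The midpoint claim then comes from the constant speed of the geodesic. Its length on $[0,s]$ is $s\,\|\log c\|$, and this equals $s\,d(a,b)$ by the distance formula with spectral values of $P(a^{-1/2})b=c$. So $\gamma(1/2)=a\gm b$ is at distance $\tfrac12 d(a,b)$ from both endpoints.

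The main obstacle is the identification of geodesics through $e$ with $t\mapsto\exp(tx)$. This needs either the standard symmetric-space fact, with a careful identification of $\mathfrak p$ and of the metric at $e$ as the trace inner product, or the direct computation of the Levi-Civita connection of $\g_x(u,v)=\inner{P(x)^{-1}u}{v}$. I would try the symmetric-space route first, since it avoids differentiating $P(x)^{-1}$.
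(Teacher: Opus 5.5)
The paper does not prove this proposition. It is quoted from \cite{LimGM}, so there is no in-paper argument to compare yours against. Your proof is correct, and it is the standard argument. You transport by the isometry $P(a^{1/2})\in G(\Omega)$ to reduce to the curve $t\mapsto c^t$ through $e$. You then use the symmetric-pair structure to identify geodesics through $e$ with $t\mapsto \exp(tL(x))e=\exp(tx)$. Finally, the Cartan--Hadamard property gives uniqueness.

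The input you flag as the main obstacle can be supplied from the paper's own list of facts. Conjugation by $j$ sends $P(y)$ to $P(y^{-1})$ by Proposition~\ref{prop:quadratic}(3), so its differential acts as $-1$ on $L(V)$. Moreover, $\g_e$ is the trace inner product, which is $\Aut(V)$-invariant.

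There are two small corrections.
\begin{enumerate}
\item With $X=L(\tfrac12\log c)$ the orbit is $c^{t/2}$, not $c^t$. Your display actually uses $X=L(\log c)$, which is the right choice. Also, $\exp(tL(y))e=\exp(ty)$ follows directly from $L(y)^n e=y^n$, so you do not need Proposition~\ref{prop:quadratic}(5).
\item In the midpoint step, knowing that each half of $\gamma$ has length $\tfrac12 d(a,b)$ only gives $d(a,\gamma(\tfrac12))\le \tfrac12 d(a,b)$ and $d(\gamma(\tfrac12),b)\le \tfrac12 d(a,b)$. The triangle inequality then forces equality in both. Alternatively, invoke that geodesics in a Hadamard manifold are minimizing.
\end{enumerate}
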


Thus $a\gm b$ is the \emph{metric geometric mean}. Its basic properties are as follows.

\begin{proposition}[see {\cite{LawsonLim,LimGM}}]\label{prop:metricmean}
Let $a,b\in \Omega$. Then:
\begin{enumerate}
\item $a\gm b$ is the unique solution  of the Riccati equation
$$
P(x)a^{-1}=b. 
$$
\item $a\gm b=b\gm a$.
\item $(a\gm b)^{-1}=a^{-1}\gm b^{-1}$.
\item $(\alpha a)\gm (\beta b)=\sqrt{\alpha\beta}\,(a\gm b)$ for $\alpha,\beta>0$.
\item $P(a\gm b)=P(a)\gm P(b)$, where the mean on the right-hand side is taken in the cone of positive definite selfadjoint operators on $V$.
\item $g(a\gm b)=g(a)\gm g(b)$ for every $g\in G(\Omega)$.
\item $\det(a\gm b)=\sqrt{\det(a)\det(b)}$.
\end{enumerate}
\end{proposition}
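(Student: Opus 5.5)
We prove the seven items of Proposition~\ref{prop:metricmean} in an order that lets later items use earlier ones, relying only on Proposition~\ref{prop:quadratic} and the definition $a\gm b=P(a^{1/2})\bigl(P(a^{-1/2})b\bigr)^{1/2}$.

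\emph{Item (1), the Riccati equation.} Existence is direct. Set $c=P(a^{-1/2})b\in\Omega$ and $x=P(a^{1/2})c^{1/2}$. The fundamental formula gives $P(x)=P(a^{1/2})P(c^{1/2})P(a^{1/2})$. Since $P(a^{1/2})a^{-1}=e$, we get
\[
P(x)a^{-1}=P(a^{1/2})P(c^{1/2})e=P(a^{1/2})c=b .
\]
For uniqueness, let $x\in\Omega$ satisfy $P(x)a^{-1}=b$ and put $y=P(a^{-1/2})x\in\Omega$. By the fundamental formula, $P(y)e=P(a^{-1/2})P(x)P(a^{-1/2})e=P(a^{-1/2})P(x)a^{-1}=c$, so $y^2=c$. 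The square root of an element of $\Omega$ is unique in $\Omega$, by the spectral decomposition (an element of $\Omega$ with square $c$ shares a Jordan frame with $c$). Hence $y=c^{1/2}$ and $x=a\gm b$. This uniqueness step is the only nontrivial point; everything else is formal.

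\emph{Items (2)--(6) from (1).} For (2), note $P(x)a^{-1}=b$ is equivalent to $a^{-1}=P(x^{-1})b$, and inverting with Proposition~\ref{prop:quadratic}(3) gives $a=P(x)b^{-1}$. So $x$ also solves the Riccati equation with $a$ and $b$ swapped, and uniqueness gives $a\gm b=b\gm a$. For (3), $P(x^{-1})(a^{-1})^{-1}=P(x)^{-1}a=b^{-1}$, using $a=P(x)b^{-1}$ from (2); thus $x^{-1}=a^{-1}\gm b^{-1}$. Item (4) follows because $P(\sqrt{\alpha\beta}\,x)(\alpha a)^{-1}=\beta P(x)a^{-1}$. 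For (6), take $g\in G(\Omega)$. The standard identity $P(gx)=gP(x)g^*$ for $g\in G(\Omega)$ (from FK) and $(gz)^{-1}=(g^*)^{-1}z^{-1}$ give
\[
P(gx)(ga)^{-1}=gP(x)g^*(g^*)^{-1}a^{-1}=g\,b .
\]
For (5), the operators $P(x)$, $P(a)$, $P(b)$ are positive definite, and $P(x)P(a)^{-1}P(x)=P(P(x)a^{-1})=P(b)$ by the fundamental formula. So $P(x)$ solves the operator Riccati equation $XA^{-1}X=B$, and its uniqueness among positive definite operators gives $P(a\gm b)=P(a)\gm P(b)$.

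\emph{Item (7).} Use $\det(P(x)y)=\det(x)^2\det(y)$ (FK Prop.~III.4.2). Applied to $P(x)a^{-1}=b$, this gives $\det(x)^2\det(a)^{-1}=\det(b)$, and positivity of $\det$ on $\Omega$ yields $\det(a\gm b)=\sqrt{\det(a)\det(b)}$.
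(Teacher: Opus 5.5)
The paper gives no proof of this proposition; it only cites Lawson--Lim and Lim. Your argument is correct and self-contained. It follows the standard route of those references: prove the Riccati characterization (1) directly, then get (2)--(6) by exhibiting a solution in the appropriate cone of the appropriate Riccati equation and invoking uniqueness, and get (7) from $\det(P(x)y)=\det(x)^2\det(y)$. The key uniqueness step works: set $y=P(a^{-1/2})x$, so that $y^2=P(y)e=c$, then use uniqueness of square roots in $\Omega$ via the spectral decomposition.

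Uniqueness in (1) holds only within $\Omega$; for instance $-x$ also solves $P(x)a^{-1}=b$. You correctly restrict to solutions in $\Omega$, and you check that each candidate lies in the relevant cone: $x^{-1}$, $gx$, $\sqrt{\alpha\beta}\,x$, and the positive definite operator $P(x)$.

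The few inputs you take from outside Proposition~\ref{prop:quadratic} can be reduced to facts the paper records in Section~\ref{sec:background}:
\begin{itemize}
\item The identities $P(gx)=gP(x)g^*$ and $(gz)^{-1}=(g^*)^{-1}z^{-1}$ for $g\in G(\Omega)$ follow from writing $g=P(y)k$ with $k\in\Aut(V)$ orthogonal. Then $P(gx)=P(y)kP(x)k^{-1}P(y)$ by the fundamental formula. Similarly, $(P(y)kz)^{-1}=P(y)^{-1}kz^{-1}$ by Proposition~\ref{prop:quadratic}(3).
\item The uniqueness of the positive definite solution of $XA^{-1}X=B$ used in (5) is your own item (1), applied to the Euclidean Jordan algebra of self-adjoint operators on $V$, where $P(X)Y=XYX$.
\end{itemize}
With these observations, every property the paper later uses can be verified from the paper's own background material. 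This includes (1), (2), (3) and (6) in Theorem~\ref{thm:factorization}, and (3), (4) and (7) in Section~\ref{sec:l2metric}.
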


Following Lee and Lim \cite{LeeLim}, for $\alpha\in\bR$, define the weighted spectral geometric mean by
$$
a\sgm_\alpha b   := P\Bigl((a^{-1}\gm b)^\alpha\Bigr)a, \quad a, b\in \Omega,
$$
and in particular
$$
a\sgm b := a\sgm_{1/2} b.
$$
We call $a\sgm b$ the \emph{spectral geometric mean} of $a$ and $b$. For positive definite matrices, this definition reduces to the Fiedler--Pt\'ak spectral mean introduced in \cite{FiedlerPtak}:
$$
A\sgm B = \bigl(A^{-1}\gm B\bigr)^{1/2}A\bigl(A^{-1}\gm B\bigr)^{1/2}.
$$
Its spectrum consists of the positive square roots of the eigenvalues of $AB$; see \cite{FiedlerPtak,LeeLim}.

\begin{proposition}[see {\cite[Proposition 4.1]{LeeLim}}]\label{prop:spectralchar}
Let $a,b\in \Omega$ and $\alpha\in \bR$. Then $a\mathbin{\sgm_\alpha} b$ is the unique element $x\in \Omega$ such that
$$
a^{-1}\gm x = (a^{-1}\gm b)^\alpha. 
$$
\end{proposition}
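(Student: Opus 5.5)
The statement to prove is Proposition~\ref{prop:spectralchar}: $a\sgm_\alpha b$ is the unique $x\in\Omega$ with $a^{-1}\gm x=(a^{-1}\gm b)^\alpha$. The plan is to reduce everything to the Riccati characterization of the metric mean in Proposition~\ref{prop:metricmean}(1). Set $c:=(a^{-1}\gm b)^\alpha\in\Omega$. Then the equation $a^{-1}\gm x=c$ is equivalent, by the uniqueness part of Proposition~\ref{prop:metricmean}(1) applied to the pair $(a^{-1},x)$, to the Riccati equation
\[
P(c)\,(a^{-1})^{-1}=x,\qquad\text{that is,}\qquad x=P(c)\,a .
\]
We will justify both directions carefully.

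For existence, define $x:=P(c)a$, which is exactly $a\sgm_\alpha b$ by definition. Since $c\in\Omega$, $P(c)\in G(\Omega)$, so $x\in\Omega$. Now $c$ satisfies $P(c)(a^{-1})^{-1}=P(c)a=x$. By Proposition~\ref{prop:metricmean}(1), $a^{-1}\gm x$ is the unique solution $y\in\Omega$ of $P(y)(a^{-1})^{-1}=x$. Hence $a^{-1}\gm x=c=(a^{-1}\gm b)^\alpha$.

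For uniqueness, suppose $x'\in\Omega$ satisfies $a^{-1}\gm x'=c$. Proposition~\ref{prop:metricmean}(1) applied to $(a^{-1},x')$ gives $P(a^{-1}\gm x')\,a=x'$, so $x'=P(c)a=x$.

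The only point needing care is using the Riccati characterization in both directions: that $a^{-1}\gm x$ actually solves $P(y)a=x$, and that this solution is unique in $\Omega$. Both are contained in Proposition~\ref{prop:metricmean}(1). If one wanted to verify them directly, the argument is as follows. For existence, the fundamental formula (Proposition~\ref{prop:quadratic}(4)) gives
\[
P\bigl(P(u^{1/2})w^{1/2}\bigr)\,u^{-1}=P(u^{1/2})P(w^{1/2})P(u^{1/2})\,u^{-1}=P(u^{1/2})\,w,
\]
where $u=a^{-1}$ and $w=P(u^{-1/2})x$, so the right-hand side equals $x$. For uniqueness, one uses that $y\mapsto P(y)u^{-1}$ is injective on $\Omega$: after transforming by $P(u^{-1/2})$, this reduces to injectivity of $z\mapsto P(z)e=z^2$ on $\Omega$, which holds by the spectral theorem.

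Thus the whole proof is a short substitution, and no step is a genuine obstacle.
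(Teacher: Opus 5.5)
Your proof is correct. With $c=(a^{-1}\gm b)^\alpha$, the Riccati characterization in Proposition~\ref{prop:metricmean}(1) applied to the pair $(a^{-1},x)$ shows that $a^{-1}\gm x=c$ holds exactly when $x=P(c)a=a\sgm_\alpha b$; your direct check of that characterization, via the fundamental formula and uniqueness of square roots in $\Omega$, is also sound.

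The paper gives no proof of its own and simply cites Lee--Lim, so there is no competing route to compare. Your argument is the standard derivation from the Riccati equation, and nothing is missing.
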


The following proposition collects the basic properties of $\sgm_\alpha$.

\begin{proposition}[see {\cite[Section 4]{LeeLim}}]\label{prop:spectralprops}
Let $a,b\in \Omega$ and $\alpha\in \bR$. Then:
\begin{enumerate}
\item $a\mathbin{\sgm_0} b=a$ and $a\mathbin{\sgm_1} b=b$.
In particular, $\alpha\mapsto  a\sgm_\alpha b$ is a smooth
curve in $\Omega$ joining $a$ to $b$.

\item $(a\mathbin{\sgm_\alpha} b)^{-1}=a^{-1}\mathbin{\sgm_\alpha} b^{-1}$.
\item $a\mathbin{\sgm_\alpha} b=b\mathbin{\sgm_{1-\alpha}} a$. In particular, $a\sgm b=b\sgm a$.
\item $a^{-1}\gm (a\mathbin{\sgm_\alpha} b)=(a^{-1}\gm b)^\alpha$.
\item $x=(a^{-1}\gm b)^\alpha$ is uniquely determined by either of the
equivalent identities
$$
a\mathbin{\sgm_\alpha} b=P(x)\,a=P(x^{-1})\,b.
$$
\item $P(a\mathbin{\sgm_\alpha} b)=P(a)\mathbin{\sgm_\alpha} P(b)$, where the right-hand side is computed in the cone of positive definite selfadjoint operators.
\item $k(a\mathbin{\sgm_\alpha} b)=k(a)\mathbin{\sgm_\alpha} k(b)$ for every $k\in \Aut(V)$.
\item $\det(a\mathbin{\sgm_\alpha} b)=\det(a)^{1-\alpha}\det(b)^\alpha$.
\end{enumerate}
\end{proposition}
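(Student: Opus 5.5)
The statement just before this point is Proposition~\ref{prop:spectralprops}, a collection of properties of $\sgm_\alpha$ cited from \cite{LeeLim}. The plan is to derive every item from two tools. The first is the defining formula $a\sgm_\alpha b=P(x)a$ with $x=(a^{-1}\gm b)^\alpha$. The second is the characterization in Proposition~\ref{prop:spectralchar}: $a\sgm_\alpha b$ is the unique $y\in\Omega$ with $a^{-1}\gm y=(a^{-1}\gm b)^\alpha$. Item (4) is this characterization restated, and every other item will be reduced to checking that some candidate satisfies the same Riccati-type identity.

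\textbf{Item (1).} For $\alpha=0$ we have $x=e$, so $a\sgm_0 b=a$. For $\alpha=1$, the Riccati characterization in Proposition~\ref{prop:metricmean}(1) gives $P(a^{-1}\gm b)a=b$, so $a\sgm_1 b=b$. Smoothness in $\alpha$ holds because $\alpha\mapsto x^\alpha$ is real-analytic.

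\textbf{Item (5).} Put $x=(a^{-1}\gm b)^\alpha$. The relation $P(x)a=P(x^{-1})b$ is equivalent to $P(x^2)a=b$. Since $x^2=(a^{-1}\gm b)^{2\alpha}$, this requires care. The cleaner route is to rewrite $P(x^{-1})b$ via $b=P(a^{-1}\gm b)a$ and the fundamental formula, Proposition~\ref{prop:quadratic}(4). I expect that (5) is meant with exponent taken so that $x^2=a^{-1}\gm b$, i.e.\ at $\alpha=1/2$, and with uniqueness of positive square roots giving uniqueness of $x$. I would verify the precise form against \cite{LeeLim} before writing it up.

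\textbf{Item (2).} Note that $(P(x)a)^{-1}=P(x^{-1})a^{-1}$ by Proposition~\ref{prop:quadratic}(3). Also $(a\gm b^{-1})^{-\alpha}=(a^{-1}\gm b)^{\alpha}$ by Proposition~\ref{prop:metricmean}(3). Hence $a^{-1}\sgm_\alpha b^{-1}=P\bigl((a\gm b^{-1})^\alpha\bigr)a^{-1}=P(x^{-1})a^{-1}$, which is $(a\sgm_\alpha b)^{-1}$.

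\textbf{Item (3).} Set $y=a\sgm_\alpha b$. Using $b^{-1}\gm a=(a^{-1}\gm b)^{-1}\cdot$-type identities together with (4) for $y$, I will show $b^{-1}\gm y=(b^{-1}\gm a)^{1-\alpha}$ and conclude by uniqueness. I expect this identity to be the main obstacle. The first thing I will try is to write $y=P(x)a$ and $b=P(x)P(x)a$ (valid at the level of the Riccati relation for powers along the commutative subalgebra generated by $a^{-1}\gm b$). Then I will use $G(\Omega)$-equivariance, Proposition~\ref{prop:metricmean}(6), applied with $g=P(x)$, to transport the computation to the base point $a$.

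\textbf{Items (6)–(8).} Item (7) follows because automorphisms commute with $P$, powers, inverses and $\gm$. Item (8) follows from $\det(P(x)a)=\det(x)^2\det a$ together with Proposition~\ref{prop:metricmean}(7): $\det x^2=(\det a^{-1}\det b)^{\alpha}$. Item (6) follows from the fundamental formula $P(P(x)a)=P(x)P(a)P(x)$. Here $P(x)=P(a^{-1}\gm b)^\alpha$, which equals $(P(a)^{-1}\gm P(b))^\alpha$ by Proposition~\ref{prop:metricmean}(5) together with Proposition~\ref{prop:quadratic}(5) (powers commute with $P$). That expression is exactly the operator-cone definition of $P(a)\sgm_\alpha P(b)$.
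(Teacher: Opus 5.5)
The paper gives no proof of this proposition; it is quoted from \cite{LeeLim}, so your proposal has to stand on its own. Items (1), (2), (4), (6), (7), (8) are fine as you sketch them. For (4), Riccati uniqueness gives it directly, since $q^\alpha$ solves $P(z)a=P(q^\alpha)a$.

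You are right to distrust (5), and you should say so outright rather than defer to the reference. Put $q:=a^{-1}\gm b$ and $x=q^\alpha$. Then $P(x^{-1})b=P(q^{-\alpha})P(q)a=P(q^{1-\alpha})a=a\sgm_{1-\alpha}b$. So $P(x)a=P(x^{-1})b$ forces $q^\alpha=q^{1-\alpha}$ by Riccati uniqueness, which means $\alpha=1/2$ or $a=b$. For a concrete failure, take $a=e$: then $e\sgm_\alpha b=b^\alpha$ while $P(x^{-1})b=b^{1-\alpha}$. Your chain $P(x)a=P(x^{-1})b\iff P(x^2)a=b\iff x^2=a^{-1}\gm b$, using $P(x)^2=P(x^2)$ (the fundamental formula with $y=e$), proves the correct $\alpha=1/2$ version. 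The correct statement for general $\alpha$ is $a\sgm_\alpha b=P(q^\alpha)a=P(q^{\alpha-1})b$.

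The genuine gap is item (3). Your proposed step $b=P(x)P(x)a$ with $x=q^\alpha$ is the $\alpha=1/2$ relation again. Indeed $P(x)^2a=P(q^{2\alpha})a$, which equals $b=P(q)a$ only when $\alpha=1/2$. Following your route gives $b^{-1}\gm y=x^{-1}=q^{-\alpha}$ instead of the required $q^{\alpha-1}$. So it proves only the special case $a\sgm b=b\sgm a$. The appeal to $G(\Omega)$-equivariance with $g=P(x)$ does not repair this, and it is not needed.

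The missing ingredient is the one-parameter group law $P(q^s)P(q^t)=P(q^{s+t})$. It follows from Proposition~\ref{prop:quadratic}(5), because $P(q^s)=\exp\bigl(2sL(\log q)\bigr)$. You already rely on this tacitly in (6) when you write $P(q)^\alpha=P(q^\alpha)$. With it, $y=a\sgm_\alpha b=P(q^\alpha)a=P(q^{\alpha-1})P(q)a=P(q^{\alpha-1})b$. By Proposition~\ref{prop:metricmean}(2),(3), $b^{-1}\gm a=a\gm b^{-1}=(a^{-1}\gm b)^{-1}=q^{-1}$. Hence $P(q^{\alpha-1})=P\bigl((b^{-1}\gm a)^{1-\alpha}\bigr)$, and the display reads $y=b\sgm_{1-\alpha}a$ straight from the definition, with no uniqueness argument needed. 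This is the same computation you called the ``cleaner route'' in (5), with exponent $\alpha-1$ in place of $-\alpha$.
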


\section{Involutive automorphisms and the submanifolds $\Omega_\sigma^\pm$}\label{sec:involutions}
We now consider \emph{involutive automorphisms} of $V$, namely Jordan automorphisms $\sigma:V\to V$ satisfying $\sigma^2=\operatorname{id}$.

For such an automorphism, consider the $\pm1$-eigenspace decomposition
$$
V^+ := \{x\in V : \sigma(x)=x\},
\qquad
V^- := \{x\in V : \sigma(x)=-x\}.
$$
Since $\sigma$ preserves the trace inner product, one has the orthogonal decomposition
$$
V=V^+\oplus^\perp V^-.
$$
Moreover,
$$
V^+V^+ \subset V^+,
\qquad
V^-V^- \subset V^+,
\qquad
V^+V^- \subset V^-.
$$
Thus $V^+$ is a Jordan subalgebra, while $V^-$ is a Jordan triple system.

A particularly important class of involutions is provided by quadratic representations.

\begin{proposition}[see {\cite[Proposition II.4.4]{FK}}]\label{prop:special}
For $x\in V$, the map $P(x)$ is an involutive automorphism of $V$ if and only if $x$ is an involutive element, that is, $x^2=e$.
\end{proposition}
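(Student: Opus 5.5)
Both directions come down to computing $P(x)^2$ and comparing it with the identity; the key tool is the fundamental formula of Proposition~\ref{prop:quadratic}(4). Note that "involutive automorphism" means $P(x)\in\Aut(V)$ and $P(x)^2=\operatorname{id}$.

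\emph{Sufficiency.} Suppose $x^2=e$. Power associativity gives $P(x)e=x^2=e$. Applying $P(x)$ to this and using the fundamental formula with $y=x$,
$$
P(x)^2=P(x)P(e)P(x)\quad\text{and}\quad P(x)^3=P(P(x)x)=P(x^3)=P(x).
$$
Since $x^2=e$, $x$ is invertible with $x^{-1}=x$, so $P(x)$ is invertible by Proposition~\ref{prop:quadratic}(1). Hence $P(x)^2=\operatorname{id}$. Next we show $P(x)\in G(\Omega)$. By the fundamental formula, $P(x)$ maps invertible squares to invertible squares: $P(x)y^2=P(x)P(y)e=\ldots$. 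A cleaner route is Proposition~\ref{prop:quadratic}(5)-type reasoning or the standard fact that $P(x)$ preserves $\Omega$ for invertible $x$. I expect this to be the only delicate point. I would argue as follows: $P(x)$ preserves $\overline{\{y^2\}}$ because $P(x)y^2=(P(x)y)\cdot\ldots$. If that fails, I would use connectedness instead. $P(x)$ maps $\Omega$ into $V^\times$ by Proposition~\ref{prop:quadratic}(3), and $\Omega$ is connected, so $P(x)\Omega$ is a connected subset of $V^\times$. It contains $P(x)e=e$, so it lies in the connected component of $V^\times$ containing $e$, which is $\Omega$. The same argument for $P(x)^{-1}=P(x)$ gives $P(x)\Omega=\Omega$. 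Thus $P(x)\in G(\Omega)$ with $P(x)e=e$, so $P(x)\in\Aut(V)$ by the fact recalled in Section~\ref{sec:background}.

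\emph{Necessity.} Suppose $P(x)$ is an automorphism and $P(x)^2=\operatorname{id}$. Every automorphism fixes $e$, so $x^2=P(x)e=e$.

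The main obstacle is justifying that $P(x)$ preserves the cone. I handle it with the connected-component argument above, which only uses invertibility (Proposition~\ref{prop:quadratic}(1),(3)) and the characterization of automorphisms as cone automorphisms fixing $e$.
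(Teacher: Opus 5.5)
Your proof is correct, but it goes through the cone, whereas the statement is purely algebraic. The paper gives no argument of its own and defers to Faraut--Kor\'anyi. The standard proof needs only the fundamental formula and $P(z)e=z^2$. If $x^2=e$ then $P(x)e=e$, hence for all $y\in V$
\[
(P(x)y)^2=P\bigl(P(x)y\bigr)e=P(x)P(y)P(x)e=P(x)P(y)e=P(x)(y^2).
\]
Replacing $y$ by $y+z$ gives $P(x)(yz)=(P(x)y)(P(x)z)$. The fundamental formula with $y=e$ gives $P(x)^2=P(P(x)e)=P(x^2)=\operatorname{id}$.

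This is exactly the line $P(x)y^2=P(x)P(y)e=\dots$ that you began and abandoned. Your identity $P(x)^2=P(x)P(e)P(x)$ is the right-hand side of the fundamental formula at $y=e$. Its left-hand side $P(P(x)e)=P(e)$ yields involutivity immediately, without the detour through $P(x)^3=P(x)$ and invertibility.

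Your route instead shows $P(x)\Omega=\Omega$ by the connected-component argument, then invokes the background fact that an element of $G(\Omega)$ fixing $e$ lies in $\Aut(V)$. That is logically sound, but it imports two results deeper than the statement itself. The first is that $\Omega$ is the connected component of $e$ in $V^\times$; this is standard but not recorded in the paper, so you should cite it. The second is the stabilizer characterization of $\Aut(V)$, a genuine structure theorem for symmetric cones.

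The algebraic proof is self-contained and holds in any unital Jordan algebra. It also removes the ``main obstacle'' you identified, since cone preservation then comes for free from being an automorphism. The necessity direction is identical in both. In a final write-up, delete the abandoned fragments (``If that fails, \dots'').
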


Such an involution will be called \emph{special}.

\begin{remark}{\rm
\begin{enumerate}
\item If $w^2=e$ and $w=\sum_{j=1}^r \lambda_j c_j$ is its spectral decomposition, then $\lambda_j=\pm 1$ for every $j$.

\item There is a bijection between idempotents $c$ and involutive elements $w$ given by
$$
w=2c-e,
\qquad
c=\frac12(w+e).
$$
\item Let $c$ be an idempotent of $V$ and  $\sigma=P(w)$ the corresponding special involutive automorphism (with $w=2c-e$). Then
$$
V^+=V_0(c)\oplus V_1(c),
\qquad
V^-=V_{1/2}(c).
$$
\end{enumerate}
}
\end{remark}

Special involutive automorphisms provide the natural framework for extending the results of \cite{Lim}. We shall nevertheless work with arbitrary involutive automorphisms.

Fix an involutive automorphism $\sigma$ and define
$$
\Omega_\sigma^+ := \{a\in \Omega : \sigma(a)=a\},
\qquad
\Omega_\sigma^- := \{b\in \Omega : \sigma(b)=b^{-1}\}.
$$

\begin{proposition}\label{prop:exp}
One has
$$
\Omega_\sigma^+=\exp(V^+),
\qquad
\Omega_\sigma^-=\exp(V^-).
$$
In particular, $\Omega_\sigma^\pm$ are simply connected, smooth, embedded submanifolds of $\Omega$.
\end{proposition}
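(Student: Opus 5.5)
The plan rests on the fact that $\exp:V\to\Omega$ is a real-analytic diffeomorphism with inverse $\log$, and that $\sigma$ commutes with the functional calculus. Since $\sigma$ is a Jordan automorphism, it maps a Jordan frame $(c_1,\dots,c_r)$ to a Jordan frame $(\sigma(c_1),\dots,\sigma(c_r))$ and fixes $e$. Hence for $x=\sum_j\lambda_jc_j$ we get
$$
\sigma(\exp x)=\sum_j e^{\lambda_j}\sigma(c_j)=\exp(\sigma(x)),
$$
and likewise $\sigma(\log a)=\log\sigma(a)$ for $a\in\Omega$. Also $(\exp x)^{-1}=\exp(-x)$, because inversion acts spectrally by $\lambda\mapsto\lambda^{-1}$.

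The two set equalities then follow directly. If $x\in V^+$, then $\sigma(\exp x)=\exp x$. Conversely, if $a\in\Omega_\sigma^+$, then $\sigma(\log a)=\log\sigma(a)=\log a$, so $a=\exp(\log a)$ with $\log a\in V^+$. Similarly, if $x\in V^-$, then $\sigma(\exp x)=\exp(-x)=(\exp x)^{-1}$. Conversely, if $\sigma(b)=b^{-1}$, then $\sigma(\log b)=\log(b^{-1})=-\log b$, so $\log b\in V^-$.

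For the geometric conclusions, $V^\pm$ are linear subspaces of $V$, hence closed embedded submanifolds that are contractible. Since $\exp$ is a global diffeomorphism $V\to\Omega$, it carries them onto closed embedded submanifolds $\Omega_\sigma^\pm$ diffeomorphic to $V^\pm$. These are therefore smooth, simply connected (indeed contractible) and embedded.

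The main point to justify carefully is that $\exp:V\to\Omega$ is a diffeomorphism with inverse $\log$. Bijectivity is immediate from the spectral decomposition: an element of $\Omega$ has positive spectral values, and $\log$ inverts $\exp$ frame by frame, with well-definedness following from uniqueness of the spectral decomposition grouped by distinct eigenvalues. For smoothness of $\log$, I would show that the differential of $\exp$ is invertible everywhere, using the standard Peirce-decomposition formula for $D\exp$ from \cite{FK}, whose eigenvalues are $(e^{\lambda_i}-e^{\lambda_j})/(\lambda_i-\lambda_j)>0$. Alternatively, I would cite \cite{FK} directly for the fact that $\exp$ is a real-analytic diffeomorphism onto $\Omega$. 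The commutation of $\sigma$ with the spectral calculus is routine once we observe that Jordan automorphisms preserve idempotents, orthogonality and primitivity.
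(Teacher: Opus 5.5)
Your proposal is correct and takes essentially the same route as the paper. Both commute $\sigma$ with $\exp$ (you via the spectral decomposition, the paper via the power series) and then use that $\exp:V\to\Omega$ is a diffeomorphism; your converse via $\sigma(\log a)=\log\sigma(a)$ is equivalent to the paper's appeal to injectivity of $\exp$.
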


\begin{proof}
Since $\sigma$ is a Jordan automorphism, it commutes with the Jordan exponential:
$$
\sigma(\exp x)=\sum_{n\ge 0}\frac{\sigma(x)^n}{n!}=\exp(\sigma(x)).
$$
If $x\in V^+$, then $\sigma(x)=x$ and hence $\sigma(\exp x)=\exp x$, so $\exp x\in \Omega_\sigma^+$. Conversely, if $a\in \Omega_\sigma^+$ and $a=\exp x$, then
$$
\exp(\sigma(x))=\sigma(\exp x)=\sigma(a)=a=\exp x.
$$
Since $\exp:V\to \Omega$ is injective, $\sigma(x)=x$, so $x\in V^+$.

Similarly, if $x\in V^-$, then $\sigma(x)=-x$, hence
$$
\sigma(\exp x)=\exp(-x)=(\exp x)^{-1},
$$
so $\exp x\in \Omega_\sigma^-$. Conversely, if $b\in \Omega_\sigma^-$ and $b=\exp x$, then
$$
\exp(\sigma(x))=\sigma(\exp x)=\sigma(b)=b^{-1}=\exp(-x),
$$
hence $\sigma(x)=-x$, so $x\in V^-$.

Since $\exp:V\to \Omega$ is a diffeomorphism, the final assertion follows.
\end{proof}

\begin{lemma}\label{lem:rho}
The map
$$
\rho:=\sigma\circ j : \Omega\to \Omega,
\qquad
\rho(x)=\sigma(x^{-1}), 
$$
is an isometry of $(\Omega,d)$.
\end{lemma}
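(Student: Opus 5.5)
Since a composition of isometries is an isometry, we show separately that $j$ and $\sigma$ (restricted to $\Omega$) are isometries of $(\Omega,d)$.

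The inversion $j$ was already noted to be an isometry in Section~\ref{sec:background}. For completeness, Proposition~\ref{prop:quadratic}(2) gives $D_x j(u)=-P(x)^{-1}u=-P(x^{-1})u$. Hence
$$
\g_{x^{-1}}\bigl(D_xj(u),D_xj(v)\bigr)=\inner{P(x)P(x^{-1})u}{P(x^{-1})v}=\inner{u}{P(x^{-1})v}=\g_x(u,v),
$$
using that $P(x^{-1})$ is self-adjoint and $P(x)P(x^{-1})=\mathrm{id}$.

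For $\sigma$, we use that it is a Jordan automorphism, so $\sigma\in\Aut(V)$. It preserves $e$, squares and interiors, hence $\sigma(\Omega)=\Omega$, so $\sigma\in G(\Omega)$. The metric $\g$ is $G(\Omega)$-invariant, so $\sigma$ is an isometry. One can also check this directly: $\sigma$ commutes with inversion and with $P$, that is, $\sigma P(x)\sigma^{-1}=P(\sigma x)$, because $\sigma L(x)\sigma^{-1}=L(\sigma x)$. Since $\sigma$ is also orthogonal for the trace inner product,
$$
\g_{\sigma x}(\sigma u,\sigma v)=\inner{P(\sigma(x)^{-1})\sigma u}{\sigma v}=\inner{\sigma P(x^{-1})u}{\sigma v}=\g_x(u,v).
$$

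Alternatively, one can argue with the distance formula. Spectral values are invariant under automorphisms, so $P(\sigma(a)^{-1/2})\sigma(b)=\sigma\bigl(P(a^{-1/2})b\bigr)$ has the same spectral values as $P(a^{-1/2})b$. For $j$, Proposition~\ref{prop:quadratic}(3) gives $P(a^{1/2})b^{-1}=\bigl(P(a^{-1/2})b\bigr)^{-1}$, whose spectral values are the reciprocals; their squared logarithms are unchanged. Finally, $\rho(\Omega)=\Omega$ and $\rho$ is a bijection, since $\rho^2=\mathrm{id}$ because $\sigma$ commutes with $j$. The argument is essentially routine; the only point that needs care is the compatibility $\sigma P(x)\sigma^{-1}=P(\sigma x)$ together with the orthogonality of $\sigma$, and both follow directly from $\sigma$ being a Jordan automorphism.
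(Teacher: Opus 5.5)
Your proof is correct and follows the same route as the paper, which simply notes that $j$ and every Jordan automorphism are isometries of $(\Omega,d)$ and that their composition is therefore an isometry. Your explicit checks, via $D_xj=-P(x^{-1})$ for $j$ and via $\sigma P(x)\sigma^{-1}=P(\sigma x)$ together with orthogonality of $\sigma$ (and alternatively through the distance formula), fill in details that the paper takes as known.
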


\begin{proof}
The inversion map $j:x\mapsto x^{-1}$ and every Jordan automorphism are isometries of $(\Omega,d)$, so their composition is again an isometry.
\end{proof}

\begin{proposition}\label{prop:totgeod}
The submanifolds $\Omega_\sigma^+$ and $\Omega_\sigma^-$ are closed, totally geodesic, and geodesically convex in $\Omega$. In particular, with the induced metric they are Hadamard manifolds.
\end{proposition}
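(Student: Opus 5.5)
The plan is to realize each of $\Omega_\sigma^+$ and $\Omega_\sigma^-$ as the fixed-point set of an isometry of $(\Omega,d)$, and then use uniqueness of geodesics in the Hadamard manifold $\Omega$.

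Indeed, $\Omega_\sigma^+=\Fix(\sigma|_\Omega)$, since $\sigma$ is a Jordan automorphism and hence an isometry. Likewise $\Omega_\sigma^-=\Fix(\rho)$ with $\rho=\sigma\circ j$, which is an isometry by Lemma~\ref{lem:rho}. Both sets are nonempty, because they contain $e$.

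\emph{Closedness.} The fixed-point set of a continuous map $\phi:\Omega\to\Omega$ is the preimage of the diagonal under $x\mapsto(x,\phi(x))$, so it is closed in $\Omega$. Alternatively, one can use Proposition~\ref{prop:exp}: $\Omega_\sigma^\pm=\exp(V^\pm)$ is the image of a closed linear subspace under the diffeomorphism $\exp:V\to\Omega$, hence closed.

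\emph{Geodesic convexity.} Let $\phi$ be an isometry and $a,b\in\Fix(\phi)$. By Proposition~\ref{prop:geodesic}, $\gamma(t)=a\gm_t b$, $t\in[0,1]$, is the unique geodesic from $a$ to $b$. Then $\phi\circ\gamma$ is also a geodesic from $\phi(a)=a$ to $\phi(b)=b$, so $\phi\circ\gamma=\gamma$. Hence $\gamma([0,1])\subset\Fix(\phi)$. The same argument applied to the full geodesic $t\in\bR$ shows that the complete geodesic through $a$ and $b$ lies in $\Fix(\phi)$. As a check, this can be made explicit for $\sigma$: since $\sigma$ is a Jordan automorphism, it commutes with $P$, square roots and powers, so $\sigma(a\gm_t b)=\sigma(a)\gm_t\sigma(b)$. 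For $\rho$ one additionally uses $(a\gm_t b)^{-1}=a^{-1}\gm_t b^{-1}$, which follows from Proposition~\ref{prop:quadratic}(3).

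\emph{Totally geodesic.} Let $x\in\Fix(\phi)$ and let $v$ be a tangent vector to $\Fix(\phi)$ at $x$. We need the geodesic $\exp_x(tv)$ to stay in $\Fix(\phi)$. The standard fact is that $d\phi_x$ fixes $v$, so $\phi(\exp_x tv)=\exp_x(t\,d\phi_x v)=\exp_x(tv)$. Concretely, here $\Fix(\phi)$ is a smooth submanifold by Proposition~\ref{prop:exp}. Any $y=\exp_x(tv)$ with small $t$ can be approximated by points of $\Fix(\phi)$ joined to $x$ by geodesics lying in $\Fix(\phi)$ (by convexity), and these geodesics have initial velocities in $T_x\Fix(\phi)$. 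So it suffices to show that $T_x\Fix(\phi)$ equals the fixed space of $d\phi_x$. The inclusion ``$\subseteq$'' is clear. For the reverse inclusion, if $d\phi_x v=v$, then the geodesic $\exp_x(tv)$ is fixed by $\phi$ and therefore lies in $\Fix(\phi)$, which gives equality. I expect this identification of tangent spaces to be the only slightly delicate point, and it is handled by this uniqueness-of-geodesics argument.

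\emph{Hadamard property.} A closed, totally geodesic, geodesically convex submanifold of a Hadamard manifold is complete, since closed subsets of a complete space are complete. It is simply connected, being diffeomorphic to $V^\pm$ via $\exp$. Its sectional curvature is nonpositive, because the second fundamental form vanishes and the Gauss equation then gives the same sectional curvatures as in $\Omega$. Geodesic convexity also ensures that the induced Riemannian distance coincides with the restriction of $d$. Hence $\Omega_\sigma^\pm$ with the induced metric are Hadamard manifolds.
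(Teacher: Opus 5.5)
Your proposal is correct and follows the same route as the paper. Both arguments realize $\Omega_\sigma^+=\Fix(\sigma)$ and $\Omega_\sigma^-=\Fix(\sigma\circ j)$ as fixed-point sets of isometries and get convexity from uniqueness of geodesics in the Hadamard manifold $\Omega$; you additionally spell out closedness, total geodesicity (via $d\phi_x$), and the Hadamard property (via the Gauss equation and $\exp:V^\pm\to\Omega_\sigma^\pm$), which the paper simply cites. The approximation sentence in your totally geodesic paragraph is superfluous: the easy inclusion $T_x\Fix(\phi)\subseteq\Fix(d\phi_x)$, together with $\phi(\exp_x tv)=\exp_x(t\,d\phi_x v)$, already finishes that step.
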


\begin{proof}
First, $\Omega_\sigma^+=\operatorname{Fix}(\sigma)$ is the fixed-point set of the isometry $\sigma$. A standard result in Riemannian geometry states that the fixed-point set of an isometry of a Cartan--Hadamard manifold is a closed, totally geodesic submanifold; see, for example, \cite[Theorem 1.10.15]{Kling} or \cite{Ballmann-NPC,Jost-NPC}. For completeness, let $x,y\in\Omega_\sigma^+$ and let $\gamma$ be the geodesic joining them. Since $\sigma$ is an isometry, $\sigma\circ\gamma$ is another geodesic joining $x$ and $y$. Uniqueness of geodesics in a Hadamard manifold gives $\sigma(\gamma(t))=\gamma(t)$, so $\gamma(t)\in\Omega_\sigma^+$ for every $t$. Thus $\Omega_\sigma^+$ is also geodesically convex.

The same argument applies to $\Omega_\sigma^-=\Fix(\rho)$ by Lemma~\ref{lem:rho}. In a Hadamard manifold, a closed totally geodesic submanifold is geodesically convex.

\end{proof}

\begin{remark}{\rm
\begin{enumerate}
\item
$\Omega_\sigma^+$ and $\Omega_\sigma^-$ are symmetric spaces of noncompact type. Indeed,
 $$
 \Omega_\sigma^+\simeq G^\sigma/K^\sigma,
 \qquad
 \Omega_\sigma^-\simeq G^\rho/K^\rho,
 $$
 where $G^\sigma=\{g\in G:\sigma\circ g\circ\sigma=g\}$, $K^\sigma$ is the stabilizer of $e$ in $G^\sigma$, and $G$ is the identity component of $G(\Omega)$. The groups $G^\rho$ and $K^\rho$ are defined analogously.

 \item Moreover, $\Omega_\sigma^+$ is the symmetric cone of the Euclidean Jordan algebra $V^+$, whereas $\Omega_\sigma^-$ is a Makarevi\v c space of noncompact type; see \cite{Bertram}.
 \end{enumerate}
 }
 \end{remark}

\begin{lemma}\label{lem:detone}
If $b\in \Omega_\sigma^-$, then $\det(b)=1$.
\end{lemma}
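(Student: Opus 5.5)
The plan is to show that every Jordan automorphism preserves the determinant, and then combine this with multiplicativity of the determinant under inversion.

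First, I will check that $\det(k x)=\det(x)$ for every $k\in\Aut(V)$ and $x\in V$. Write the spectral decomposition $x=\sum_j\lambda_j c_j$ with respect to a Jordan frame $(c_1,\dots,c_r)$. Then $kx=\sum_j\lambda_j\, k(c_j)$. Since $k$ is a Jordan automorphism, $(k(c_1),\dots,k(c_r))$ is again a family of pairwise orthogonal idempotents summing to $k(e)=e$. Each $k(c_j)$ is also primitive, because a splitting of $k(c_j)$ into two nonzero orthogonal idempotents would pull back under $k^{-1}$ to a splitting of $c_j$. So this family is a Jordan frame, the spectral values of $kx$ are the $\lambda_j$, and $\det(kx)=\det(x)$. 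In particular $\det(\sigma(b))=\det(b)$.

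Second, for $b\in\Omega$ with spectral decomposition $b=\sum_j\lambda_j c_j$, the definition of powers gives $b^{-1}=\sum_j\lambda_j^{-1}c_j$. Hence $\det(b^{-1})=\det(b)^{-1}$.

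Now let $b\in\Omega_\sigma^-$, so $\sigma(b)=b^{-1}$. The two facts give
\[
\det(b)=\det(\sigma(b))=\det(b^{-1})=\det(b)^{-1},
\]
so $\det(b)^2=1$. Since $b\in\Omega$, all its spectral values are positive, so $\det(b)>0$ and therefore $\det(b)=1$.

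There is also a quicker route through Proposition~\ref{prop:exp}. Write $b=\exp x$ with $x\in V^-$; then $\det b=e^{\tr x}$. Since $\sigma$ preserves the trace inner product and fixes $e$, we have $\tr x=\inner{x}{e}=\inner{\sigma x}{\sigma e}=-\inner{x}{e}$, so $\tr x=0$ and again $\det b=1$.

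The only step that needs any care is the invariance of the spectral values under automorphisms, and specifically the primitivity of the $k(c_j)$. I would either take this as standard from \cite{FK} or give the one-line argument above.
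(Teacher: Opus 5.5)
Your argument is correct and is essentially the paper's proof: the chain $\det(b)=\det(\sigma(b))=\det(b^{-1})=\det(b)^{-1}$ combined with $\det(b)>0$, where you also justify (by sending a Jordan frame to a Jordan frame) that automorphisms preserve the determinant, a fact the paper simply asserts. Your second route, writing $b=\exp x$ with $x\in V^-$ and using $\tr x=\inner{x}{e}=0$, is also valid and not circular, since Proposition~\ref{prop:exp} precedes the lemma.
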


\begin{proof}
Since Jordan automorphisms preserve the determinant,
$$
\det(b)=\det(\sigma(b))=\det(b^{-1})=\det(b)^{-1}.
$$
As $\det(b)>0$, it follows that $\det(b)=1$.
\end{proof}

\section{Mostow's decomposition of $\Omega$}\label{sec:factorization}
From now on, $V$ is a simple Euclidean Jordan algebra, $\Omega$ is its symmetric cone, and $\sigma$ is an involutive automorphism of $V$.

Define
$$
f_\sigma : \Omega_\sigma^+\times \Omega_\sigma^- \to \Omega,
\qquad
f_\sigma(a,b)=P(a^{1/2})\,b.
$$

\begin{theorem}[Mostow-type factorization]\label{thm:factorization}
The map $f_\sigma$ is a real-analytic diffeomorphism. Its inverse is
\begin{equation}\label{eq:inv-form}
f_\sigma^{-1}(x)=\bigl(x\gm \sigma(x),\ x\sgm \sigma(x^{-1})\bigr).
\end{equation}
\end{theorem}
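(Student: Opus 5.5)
The approach is to verify directly that the proposed map $g(x):=\bigl(x\gm \sigma(x),\ x\sgm \sigma(x^{-1})\bigr)$ is a two-sided inverse of $f_\sigma$. Both $f_\sigma$ and $g$ are built from quadratic representations, square roots, inverses and $\sigma$, all of which are real-analytic on $\Omega$. The two-sided inverse property will therefore give an analytic diffeomorphism. Along the way we must also check that $g$ takes values in $\Omega_\sigma^+\times\Omega_\sigma^-$.

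\textbf{Step 1: $g$ lands in the product.} Put $a=x\gm\sigma(x)$. Since $\sigma\in\Aut(V)\subset G(\Omega)$, Proposition~\ref{prop:metricmean}(6) and (2) give $\sigma(a)=\sigma(x)\gm x=a$, so $a\in\Omega_\sigma^+$. Put $b=x\sgm\sigma(x^{-1})$. Then Proposition~\ref{prop:spectralprops}(7), (3) and (2) give
$$\sigma(b)=\sigma(x)\sgm x^{-1}=x^{-1}\sgm\sigma(x)=(x\sgm\sigma(x^{-1}))^{-1}=b^{-1},$$
so $b\in\Omega_\sigma^-$.

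\textbf{Step 2: $f_\sigma\circ g=\mathrm{id}$.} We need $P(a^{1/2})b=x$. Write $y=x^{-1}\gm\sigma(x^{-1})$. By Proposition~\ref{prop:spectralprops}(5), $b=P(y^{1/2})x$. By Proposition~\ref{prop:metricmean}(3), $y=a^{-1}$. Hence
$$P(a^{1/2})b=P(a^{1/2})P(a^{-1/2})x=x,$$
where the last equality uses $P(a^{1/2})P(a^{-1/2})=\mathrm{id}$, which is Proposition~\ref{prop:quadratic}(1).

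\textbf{Step 3: $g\circ f_\sigma=\mathrm{id}$.} This is the main obstacle. Let $x=P(a^{1/2})b$ with $a\in\Omega_\sigma^+$ and $b\in\Omega_\sigma^-$. Since $\sigma$ is an automorphism fixing $a$, we have $\sigma P(a^{1/2})\sigma=P(a^{1/2})$, and therefore $\sigma(x)=P(a^{1/2})b^{-1}$. We must show $x\gm\sigma(x)=a$. By Proposition~\ref{prop:metricmean}(1) it suffices to check $P(a)\sigma(x)^{-1}=x$. Using Proposition~\ref{prop:quadratic}(3),
$$P(a)\sigma(x)^{-1}=P(a)P(a^{-1/2})b=P(a^{1/2})b=x,$$
so the first component of $g(x)$ is $a$. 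For the second component, compute $x\sgm\sigma(x^{-1})=P\bigl((x^{-1}\gm\sigma(x^{-1}))^{1/2}\bigr)x$ as in Step 2. The first component gives $x^{-1}\gm\sigma(x^{-1})=a^{-1}$, so this equals $P(a^{-1/2})P(a^{1/2})b=b$. Thus $g$ is a two-sided inverse, and the step reduces to the Riccati characterisation once $\sigma(x)$ has been identified.

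\textbf{Step 4: regularity.} The map $f_\sigma$ is real-analytic on the analytic submanifold $\Omega_\sigma^+\times\Omega_\sigma^-$ (Proposition~\ref{prop:exp}). The inverse $g$ is a composition of analytic operations on $\Omega$: $x\mapsto x^{\pm1/2}$, $P$, $\sigma$ and inversion. It takes values in this closed embedded submanifold, so it is analytic as a map into it. Hence $f_\sigma$ is a real-analytic diffeomorphism with inverse given by \eqref{eq:inv-form}.
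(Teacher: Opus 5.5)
Your proof is correct and is essentially the paper's argument. In both, the key step is to write $x=P(a^{1/2})b$, identify $\sigma(x)=P(a^{1/2})b^{-1}$, and use uniqueness in the Riccati equation to get $a=x\gm\sigma(x)$; then $b=P(a^{-1/2})x$ follows from the definition of $\sgm$ together with $(x\gm\sigma(x))^{-1}=x^{-1}\gm\sigma(x^{-1})$. The only difference is cosmetic: you check $\sigma(b)=b^{-1}$ using Proposition~\ref{prop:spectralprops}(7), (3), (2), whereas the paper sets $b=P(a^{-1/2})x$ and derives $\sigma(b)=b^{-1}$ directly from $P(a)x^{-1}=\sigma(x)$.
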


\begin{proof}
Let $x\in \Omega$ and set
$$
a:=x\gm \sigma(x).
$$
By Proposition~\ref{prop:metricmean}(2) and (6),
$$
\sigma(a)=\sigma(x)\gm \sigma^2(x)=\sigma(x)\gm x=x\gm \sigma(x)=a,
$$
so $a\in \Omega_\sigma^+$.

Now define
$$
b:=P(a^{-1/2})\,x.
$$
We show that $b\in \Omega_\sigma^-$. Since $a=x\gm \sigma(x)$, Proposition~\ref{prop:metricmean}(1) gives
$$
P(a)x^{-1}=\sigma(x).
$$
Using the fact that $\sigma$ commutes with $P$, we obtain
$$
\sigma(b)=\sigma\bigl(P(a^{-1/2})x\bigr)=P(a^{-1/2})\sigma(x)=P(a^{-1/2})P(a)x^{-1}=P(a^{1/2})x^{-1}.
$$
On the other hand, Proposition~\ref{prop:quadratic}(3) yields
$$
b^{-1}=\bigl(P(a^{-1/2})x\bigr)^{-1}=P(a^{1/2})x^{-1}.
$$
Hence $\sigma(b)=b^{-1}$ and therefore $b\in \Omega_\sigma^-$.

Also,
$$
f_\sigma(a,b)=P(a^{1/2})P(a^{-1/2})x=x.
$$
To identify the second component, note that by Proposition~\ref{prop:metricmean}(3),
$$
a^{-1}=(x\gm \sigma(x))^{-1}=x^{-1}\gm \sigma(x^{-1}).
$$
Therefore, by the definition of the spectral mean,
$$
x\sgm \sigma(x^{-1})
=
P\Bigl((x^{-1}\gm \sigma(x^{-1}))^{1/2}\Bigr)x
=
P(a^{-1/2})x
=
b.
$$
Thus $f_\sigma$ is surjective and the map in \eqref{eq:inv-form} is its right inverse.

To prove injectivity, suppose
$$
x=f_\sigma(a_1,b_1)=f_\sigma(a_2,b_2),
\qquad
(a_i,b_i)\in \Omega_\sigma^+\times \Omega_\sigma^-.
$$
Fix any representation $x=P(a^{1/2})b$ with $a\in \Omega_\sigma^+$ and $b\in \Omega_\sigma^-$. Since $\sigma(a)=a$ and $\sigma(b)=b^{-1}$,
$$
\sigma(x)=P(a^{1/2})b^{-1}.
$$
Also
$$
x^{-1}=\bigl(P(a^{1/2})b\bigr)^{-1}=P(a^{-1/2})b^{-1},
$$
so
$$
P(a)x^{-1}=P(a^{1/2})b^{-1}=\sigma(x).
$$
By Proposition~\ref{prop:metricmean}(1), the unique solution of this Riccati equation is $a=x\gm \sigma(x)$. Hence $a$ is uniquely determined by $x$, and therefore $a_1=a_2$. Once $a$ is known,
$$
b=P(a^{-1/2})x
$$
is uniquely determined as well. So $b_1=b_2$, proving injectivity.

Finally, $f_\sigma$ is clearly real analytic. The right-hand side of \eqref{eq:inv-form} is a composition of real analytic maps on $\Omega$ (the geometric mean, inversion, the square root, and $P$) so $f_\sigma^{-1}$ is also real analytic. Therefore $f_\sigma$ is a real analytic diffeomorphism.
\end{proof}

For $x\in\Omega$, define
$$
\pi_\sigma^+(x):=x\gm\sigma(x),
\qquad
\pi_\sigma^-(x):=x\gm\sigma(x^{-1}).
$$
By the preceding discussion, $\pi_\sigma^\pm(x)\in\Omega_\sigma^\pm$.

\begin{proposition}\label{prop:projection}
For every $x\in\Omega$, one has
$$
\pi_\sigma^\pm(x)=\Argmin_{y\in \Omega_\sigma^\pm} d(x,y).
$$
In other words, $\pi_\sigma^\pm(x)$ is the metric projection of $x$ onto $\Omega_\sigma^\pm$.
\end{proposition}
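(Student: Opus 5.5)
The plan is to treat both cases at once by viewing them as metric projections onto the fixed-point set of an involutive isometry $\tau$ of the Hadamard manifold $(\Omega,d)$. For $\pi_\sigma^+$ take $\tau=\sigma$. For $\pi_\sigma^-$ take $\tau=\rho=\sigma\circ j$ from Lemma~\ref{lem:rho}. Since $\sigma$ commutes with inversion, $\rho^2=\sigma j\sigma j=\mathrm{id}$, so $\rho$ is an involutive isometry with $\Fix(\rho)=\Omega_\sigma^-$.

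In both cases the candidate projection is $m:=x\gm\tau(x)$: for $\tau=\sigma$ this is $\pi_\sigma^+(x)=x\gm\sigma(x)$, and for $\tau=\rho$ it is $x\gm\sigma(x^{-1})=\pi_\sigma^-(x)$. By Proposition~\ref{prop:geodesic}, $m$ is the midpoint of the geodesic from $x$ to $\tau(x)$.

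First I check that $m\in\Fix(\tau)$. The isometry $\tau$ maps the geodesic from $x$ to $\tau(x)$ onto a geodesic from $\tau(x)$ to $\tau^2(x)=x$. By uniqueness of geodesics this image is the same segment traversed in reverse, so $\tau$ fixes the midpoint $m$. For $\tau=\sigma$ this was already shown in the proof of Theorem~\ref{thm:factorization}.

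The key step is a Pythagorean-type inequality. Let $y\in\Fix(\tau)$. Since $\tau$ is an isometry fixing $y$,
$$
d(\tau(x),y)=d(\tau(x),\tau(y))=d(x,y).
$$
In a Hadamard manifold the semi-parallelogram law (Bruhat--Tits CN inequality) holds for the midpoint $m$ of $x$ and $\tau(x)$:
$$
d(m,y)^2\le \tfrac12 d(x,y)^2+\tfrac12 d(\tau(x),y)^2-\tfrac14 d(x,\tau(x))^2 .
$$
Using $d(x,\tau(x))=2d(x,m)$ and the equality above, this becomes
$$
d(x,y)^2\ \ge\ d(x,m)^2+d(m,y)^2 .
$$
Hence $d(x,y)\ge d(x,m)$, with equality only if $y=m$. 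So $m$ is the unique minimizer of $d(x,\cdot)$ on $\Fix(\tau)=\Omega_\sigma^\pm$, which proves both the existence and the uniqueness of the Argmin.

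The only nontrivial ingredient is the semi-parallelogram law. It is standard for Cartan--Hadamard manifolds (see \cite{Ballmann-NPC,Jost-NPC}), and it follows from comparison with Euclidean triangles, or equivalently from the fact that $d(\cdot,y)^2$ is $2$-convex along geodesics. I expect to cite it rather than reprove it, so no step should be a real obstacle; the one point to be careful about is verifying $\rho^2=\mathrm{id}$ and the formula for $\pi_\sigma^-$.
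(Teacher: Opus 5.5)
Your proof is correct, and its skeleton matches the paper's. Both arguments take the midpoint $m$ of $x$ and $\tau(x)$ for the involutive isometry $\tau=\sigma$ or $\tau=\rho=\sigma\circ j$, and both use $d(\tau(x),y)=d(x,y)$ for $y\in\Fix(\tau)$.

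The difference is the key inequality. The paper uses only the triangle inequality, $2d(x,m)=d(x,\tau(x))\le d(x,y)+d(y,\tau(x))=2d(x,y)$. That gives minimality, and the paper then gets uniqueness from the general fact that metric projections onto closed geodesically convex subsets of a Hadamard manifold are unique (Proposition~\ref{prop:totgeod}). You use the semi-parallelogram (CN) inequality instead, which gives the Pythagorean-type estimate $d(x,y)^2\ge d(x,m)^2+d(m,y)^2$. This yields minimality and uniqueness in one step, without the projection theorem, whose standard proof rests on the same CN inequality anyway. The cost is a slightly less elementary input than the bare triangle inequality.

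Your explicit check that $m\in\Fix(\tau)$ is also worthwhile. The paper asserts $\pi_\sigma^-(x)\in\Omega_\sigma^-$ ``by the preceding discussion''. However, the proof of Theorem~\ref{thm:factorization} only established membership for $x\gm\sigma(x)$ and for the spectral mean $x\sgm\sigma(x^{-1})$, not for $x\gm\sigma(x^{-1})$. Your geodesic-reversal argument, using $\rho^2=\mathrm{id}$, fills that step.
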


\begin{proof}
The proof is the usual midpoint argument for fixed-point sets of isometries in a Hadamard manifold.

For the $+$-case, let $x\in \Omega$ and $y\in \Omega_\sigma^+$. Since $\pi_\sigma^+(x)=x\gm \sigma(x)$ is the midpoint of the geodesic from $x$ to $\sigma(x)$,
$$
d\bigl(x,\pi_\sigma^+(x)\bigr)=\frac12\,d\bigl(x,\sigma(x)\bigr).
$$
By the triangle inequality,
$
d\bigl(x,\sigma(x)\bigr)\le d(x,y)+d\bigl(y,\sigma(x)\bigr).
$
Since $\sigma(y)=y$ and $\sigma$ is an isometry, we have
$
d\bigl(y,\sigma(x)\bigr)=d\bigl(\sigma(y),\sigma(x)\bigr)=d(y,x).
$
Hence
$$
d\bigl(x,\pi_\sigma^+(x)\bigr)\le d(x,y),
\qquad \forall y\in\Omega_\sigma^+.
$$
Thus $\pi_\sigma^+(x)$ minimizes the map $y\mapsto d(x,y)$ on $\Omega_\sigma^+$.

The proof for the $-$-case is identical, using the isometry $\rho=\sigma\circ j$ from Lemma~\ref{lem:rho}. Indeed, if $y\in \Omega_\sigma^-$, then $\rho(y)=y$, so
$$
d\bigl(y,\sigma(x^{-1})\bigr)=d\bigl(\rho(y),\rho(x)\bigr)=d(y,x).
$$
Therefore the midpoint of the geodesic from $x$ to $\sigma(x^{-1})$ minimizes the distance from $x$ to $\Omega_\sigma^-$. Finally, by Proposition~\ref{prop:totgeod}, the metric projection is unique, and thus
$$
\pi_\sigma^\pm(x)
=
\Argmin_{y\in\Omega_\sigma^\pm} d(x,y).
$$

\end{proof}

 \begin{remark}{\rm
 It is convenient to distinguish between the factorization components and the metric projections. We therefore set
\begin{equation}\label{eq:fact-compo}
q_\sigma^+(x):=x\gm \sigma(x),
\qquad
q_\sigma^-(x):=x\sgm \sigma(x^{-1}),
\end{equation}
so that $f_\sigma^{-1}(x)=\bigl(q_\sigma^+(x),q_\sigma^-(x)\bigr)$.

Notice that $q_\sigma^+(x)=\pi_\sigma^+(x)$, whereas $q_\sigma^-(x)$ and $\pi_\sigma^-(x)$ generally differ.
}
\end{remark}

\begin{corollary}[Mostow decomposition]\label{cor:mostow}
Every $x\in \Omega$ can be written uniquely as
$$
x=P(a^{1/2})\,b
$$
with $a\in \Omega_\sigma^+$ and $b\in \Omega_\sigma^-$.
\end{corollary}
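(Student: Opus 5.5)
This corollary is a restatement of Theorem~\ref{thm:factorization}, so the plan is to read existence and uniqueness off that theorem. Existence is surjectivity of $f_\sigma$ and uniqueness is injectivity; both were established there. For completeness I would still spell out the two halves directly.

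For existence, given $x\in\Omega$ I would set $a:=x\gm\sigma(x)$. Then $a\in\Omega_\sigma^+$ by Proposition~\ref{prop:metricmean}(2),(6), since $\sigma(a)=\sigma(x)\gm x=a$. Next I would put $b:=P(a^{-1/2})x$. To check that $b\in\Omega_\sigma^-$, I would combine three facts:
\begin{itemize}
\item the Riccati identity $P(a)x^{-1}=\sigma(x)$ from Proposition~\ref{prop:metricmean}(1);
\item the commutation $\sigma\circ P(y)=P(\sigma y)\circ\sigma$, together with $\sigma(a^{-1/2})=a^{-1/2}$;
\item the inversion formula $(P(y)z)^{-1}=P(y^{-1})z^{-1}$ from Proposition~\ref{prop:quadratic}(3).
\end{itemize}
These give $\sigma(b)=P(a^{1/2})x^{-1}=b^{-1}$. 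By construction $P(a^{1/2})b=x$, which proves existence.

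For uniqueness, suppose $x=P(a^{1/2})b$ with $a\in\Omega_\sigma^+$ and $b\in\Omega_\sigma^-$. Applying $\sigma$ gives $\sigma(x)=P(a^{1/2})b^{-1}$. Inverting gives $x^{-1}=P(a^{-1/2})b^{-1}$. Combining the two yields $P(a)x^{-1}=\sigma(x)$. By the uniqueness in Proposition~\ref{prop:metricmean}(1), this forces $a=x\gm\sigma(x)$. Since $P(a^{1/2})$ is invertible, $b=P(a^{-1/2})x$ is then determined as well.

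There is no real obstacle here. The only point needing care is the commutation $\sigma P(y)=P(\sigma y)\sigma$ for Jordan automorphisms, which follows from $P(y)=2L(y)^2-L(y^2)$ and $\sigma L(y)\sigma^{-1}=L(\sigma y)$.
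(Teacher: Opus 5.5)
Your proposal is correct and matches the paper: the corollary is simply the bijectivity of $f_\sigma$ from Theorem~\ref{thm:factorization}. Your existence and uniqueness arguments reproduce that theorem's proof step for step, namely $a=x\gm\sigma(x)$ and $b=P(a^{-1/2})x$, the Riccati identity, the commutation of $\sigma$ with $P$, the inversion formula, and uniqueness of the Riccati solution.
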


Since $\Omega_\sigma^\pm=\exp(V^\pm)$ by Proposition~\ref{prop:exp}, one also obtains:

\begin{corollary}[Mostow decomposition]\label{cor:mostow-exp}
Every $x\in \Omega$ can be written uniquely in the form
$$
x=P\bigl((\exp d)^{1/2}\bigr)\exp(v)
$$
with $d\in V^+$ and $v\in V^-$.
\end{corollary}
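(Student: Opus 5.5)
The statement is Corollary~\ref{cor:mostow-exp}. I would derive it directly from Corollary~\ref{cor:mostow} by reparametrizing each factor through Proposition~\ref{prop:exp}.

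For existence, fix $x\in\Omega$. Corollary~\ref{cor:mostow} gives $a\in\Omega_\sigma^+$ and $b\in\Omega_\sigma^-$ with $x=P(a^{1/2})b$. By Proposition~\ref{prop:exp}, $\Omega_\sigma^+=\exp(V^+)$ and $\Omega_\sigma^-=\exp(V^-)$. So there are $d\in V^+$ and $v\in V^-$ with $a=\exp d$ and $b=\exp v$. Substituting gives $x=P\bigl((\exp d)^{1/2}\bigr)\exp(v)$.

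For uniqueness, suppose $x=P\bigl((\exp d_i)^{1/2}\bigr)\exp(v_i)$ for $i=1,2$, with $d_i\in V^+$ and $v_i\in V^-$. Proposition~\ref{prop:exp} shows that $\exp d_i\in\Omega_\sigma^+$ and $\exp v_i\in\Omega_\sigma^-$. The uniqueness part of Corollary~\ref{cor:mostow} then gives $\exp d_1=\exp d_2$ and $\exp v_1=\exp v_2$. Since the Jordan exponential $\exp:V\to\Omega$ is injective (it is a diffeomorphism with inverse $\log$, as used in the proof of Proposition~\ref{prop:exp}), we get $d_1=d_2$ and $v_1=v_2$.

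There is no real obstacle here. The only point to state explicitly is the injectivity of $\exp$ on all of $V$, which ensures that the parameters $d$ and $v$, and not just the elements $a$ and $b$, are unique. Optionally, one can add explicit formulas from Theorem~\ref{thm:factorization}: $d=\log\bigl(x\gm\sigma(x)\bigr)$ and $v=\log\bigl(x\sgm\sigma(x^{-1})\bigr)$. This shows that $x\mapsto(d,v)$ is a real-analytic diffeomorphism $\Omega\to V^+\times V^-$.
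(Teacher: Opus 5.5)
Your proposal is correct and matches the paper's argument: the paper likewise composes the bijections $\exp\colon V^\pm\to\Omega_\sigma^\pm$ from Proposition~\ref{prop:exp} with the factorization map $f_\sigma$. The only difference is presentation: the paper states this as a composition of real-analytic diffeomorphisms, while you give separate existence and uniqueness steps, with the injectivity of $\exp$ made explicit.
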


\begin{proof}
The map $d\mapsto\exp d$ is a diffeomorphism from $V^+$ onto $\Omega_\sigma^+$, the map $v\mapsto\exp v$ is a diffeomorphism from $V^-$ onto $\Omega_\sigma^-$, and $f_\sigma$ is a diffeomorphism from $\Omega_\sigma^+\times\Omega_\sigma^-$ onto $\Omega$. Consequently, the map
$$
\Phi : V^+ \times V^- \to \Omega, \qquad
\Phi(d,v) = P\big( (\exp d)^{1/2}\big) \exp(v)
$$
is a real-analytic diffeomorphism.
\end{proof}

This is the symmetric cone analogue of the classical Mostow decomposition \eqref{eq:Mostow-Intro} stated in Theorem~\ref{MostowThm}.

\section{Mostow's decomposition of $G(\Omega)_0$}\label{sec:MostowG}

Let $G:=G(\Omega)_0$ be the identity component of $G(\Omega)$, and let $K:=\Aut(V)_0$ be the identity component of $\Aut(V)$. Then
$$
K=G\cap O(V),
$$
where $O(V)$ is the orthogonal group of $V$. Recall that $G$, and more generally $G(\Omega)$, is a reductive Lie group with Cartan involution $g\mapsto g^{-1*}$, where $g^*$ denotes the adjoint of $g$ with respect to the associative inner product \eqref{eq:associative-ip}.

By the polar decomposition of $G$ \cite[Theorem III.5.1]{FK}, every $g\in G$ can be written uniquely as $g=P(x)k$, with $x\in\Omega$ and $k\in K$. The next theorem refines this decomposition by applying the Mostow factorization to the cone variable $x$.

 \begin{theorem}[Mostow's decomposition of $G$]
The map
$$
\Psi_\sigma:\Omega_\sigma^+\times \Omega_\sigma^-\times K\to G,
\qquad
\Psi_\sigma(a,b,k)=P(a^{1/2})\,P(b^{1/2})\,k,
$$
is a real analytic diffeomorphism.

More explicitly, for $g\in G$, let
$$
x:=\bigl((gg^*)e\bigr)^{1/2}\in\Omega,
$$
and define
$$
a:=x\gm \sigma(x)\in\Omega_\sigma^+,
\qquad
b:=x\sgm\sigma(x^{-1})\in\Omega_\sigma^-.
$$
Then
$$
\Psi_\sigma^{-1}(g)
=
\Bigl(
a,\,
b,\,
P(b^{-1/2})\,P(a^{-1/2})\,g
\Bigr).
$$
Equivalently, every $g\in G$ admits a unique factorization
$$
g=P(a^{1/2})\,P(b^{1/2})\,k,
\qquad
a\in \Omega_\sigma^+,
\quad
b\in \Omega_\sigma^-,
\quad
k\in K.
$$
\end{theorem}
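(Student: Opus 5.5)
The plan is to reduce everything to the polar decomposition of $G$ and to Theorem~\ref{thm:factorization}. The key identity is that for $h=P(a^{1/2})P(b^{1/2})$ one has, by self-adjointness of $P$ and the fundamental formula (Proposition~\ref{prop:quadratic}(4)),
$$
hh^*=P(a^{1/2})P(b^{1/2})P(b^{1/2})P(a^{1/2})=P(a^{1/2})P(b)P(a^{1/2})=P\bigl(P(a^{1/2})b\bigr),
$$
where $P(b^{1/2})^2=P(b)$ is the case $x=b^{1/2}$, $y=e$ of the fundamental formula. Similarly, if $g=P(y)k$ is the polar decomposition with $y\in\Omega$ and $k\in K$, then $gg^*=P(y)^2=P(y^2)$ since $kk^*=\mathrm{id}$. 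Evaluating at $e$ and using $P(z)e=z^2$, we get $(gg^*)e=y^4$, so $x=((gg^*)e)^{1/2}=y^2\in\Omega$ and $gg^*=P(x)$.

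\emph{Existence.} Given $g\in G$, I will form $x$ as above and let $(a,b)=f_\sigma^{-1}(x)$, which is given by the formula of Theorem~\ref{thm:factorization}. Then I set $h:=P(a^{1/2})P(b^{1/2})$. First, $h\in G$: the paths $t\mapsto P(a^{t/2})$ and $t\mapsto P(b^{t/2})$, $t\in[0,1]$, connect the identity to each factor inside $G(\Omega)$. By the identity above, $hh^*=P(P(a^{1/2})b)=P(x)=gg^*$. Hence $k:=h^{-1}g$ satisfies $kk^*=h^{-1}gg^*h^{-*}=\mathrm{id}$, so $k\in G\cap O(V)=K$. Thus $g=P(a^{1/2})P(b^{1/2})k$, and $k=P(b^{-1/2})P(a^{-1/2})g$, using $P(z)^{-1}=P(z^{-1})$ from Proposition~\ref{prop:quadratic}(1). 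This gives surjectivity and the stated formula for $\Psi_\sigma^{-1}$.

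\emph{Uniqueness.} Suppose $g=P(a^{1/2})P(b^{1/2})k$ with $(a,b,k)\in\Omega_\sigma^+\times\Omega_\sigma^-\times K$. The same computation gives $gg^*=P(z)$ with $z:=P(a^{1/2})b\in\Omega$. Then $(gg^*)e=z^2$, so $z=((gg^*)e)^{1/2}=x$ is determined by $g$. By injectivity of $f_\sigma$ in Theorem~\ref{thm:factorization}, $(a,b)=f_\sigma^{-1}(x)$ is determined, and then $k=P(b^{-1/2})P(a^{-1/2})g$ is determined as well.

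\emph{Regularity.} $\Psi_\sigma$ is clearly real analytic. The inverse is a composition of real-analytic maps: $g\mapsto (gg^*)e$, the square root on $\Omega$, $f_\sigma^{-1}$ (analytic by Theorem~\ref{thm:factorization}), and $P$ composed with group multiplication. Hence $\Psi_\sigma$ is a real-analytic diffeomorphism.

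The main obstacle is the bookkeeping showing that $h^{-1}g$ lies in the identity component $K$ and not merely in $\Aut(V)\cap G(\Omega)$. This is handled by noting that $h$, and hence $h^{-1}g$, lies in the connected group $G$, and that $K=G\cap O(V)$ as recalled above. The same fact guarantees $P(y)\in G$ in the polar decomposition. The remaining identities, $P(z)e=z^2$ and $P(z)^2=P(z^2)$, are immediate from the fundamental formula.
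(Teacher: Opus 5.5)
Your proposal is correct and follows essentially the same route as the paper. You identify $gg^*=P(x)$ via the polar decomposition, factor $x=P(a^{1/2})b$ by Theorem~\ref{thm:factorization}, use the fundamental formula to get $hh^*=gg^*$ for $h=P(a^{1/2})P(b^{1/2})$, and conclude $h^{-1}g\in G\cap O(V)=K$. The differences are minor: you prove uniqueness by recovering $x$ from $gg^*$ rather than by evaluating both sides at $e$ (using $ke=e$), and you explicitly justify $P(a^{1/2})P(b^{1/2})\in G$ via the paths $t\mapsto P(a^{t/2})$, a point the paper leaves implicit.
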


\begin{proof}
Let $g\in G$. Since $g^*\in G$,  
$$
q:=gg^*
$$
belongs to $G$ and is positive self adjoint. By the polar decomposition of $G$, there
exists a unique $x\in\Omega$ such that
$
q=P(x).
$
Hence,
$
x=\bigl((gg^*)e\bigr)^{1/2}.
$
By Theorem~\ref{thm:factorization}, there exist unique
$
a\in\Omega_\sigma^+$ and
$
b\in\Omega_\sigma^-,
$
such that
$
x=P(a^{1/2})\,b.
$
Set
$$
h:=P(a^{1/2}),
\qquad
f:=P(b^{1/2}).
$$
Then, by the fundamental formula for quadratic representations,
$$
q=P(x)=P(P(a^{1/2})b)=P(a^{1/2})\,P(b)\,P(a^{1/2})=h\,f^2\,h,
$$
where $h=P(a^{1/2})$ and $f=P(b^{1/2})$.
Since $h$ and $f$ are self adjoint,
$
h\,f^2\,h=(hf)(hf)^*.
$
Now define
$$
k:=(hf)^{-1}g=P(b^{-1/2})\,P(a^{-1/2})\,g.
$$
Then
$$
kk^*=(hf)^{-1}\,gg^*\,(hf)^{-1*}
=(hf)^{-1}\,q\,(hf)^{-1*}
=I.
$$
Thus $k$ is orthogonal. Since $k\in G$, it follows that
$
k\in G\cap O(V)=K.
$
Therefore
$$
g=hfk=P(a^{1/2})\,P(b^{1/2})\,k,
$$
which proves surjectivity.

To prove injectivity, suppose
$$
P(a_1^{1/2})P(b_1^{1/2})k_1
=
P(a_2^{1/2})P(b_2^{1/2})k_2.
$$
Applying both sides to the unit element $e$ yields
$$
P(a_1^{1/2})b_1=P(a_2^{1/2})b_2.
$$
By uniqueness of the Mostow decomposition of $\Omega$, we obtain $a_1=a_2$ and $b_1=b_2$, and then also $k_1=k_2$. Hence $\Psi_\sigma$ is bijective.

Finally, $\Psi_\sigma$ is real analytic, and its inverse is real analytic because
it is obtained from the real analytic operations
$$
g\mapsto gg^*,\qquad
q\mapsto (qe)^{1/2},\qquad
x\mapsto x\gm \sigma(x),\qquad
x\mapsto x\sgm\sigma(x^{-1}),
$$
together with multiplication and inversion in $G$.
\end{proof}

\begin{remark}
    {\rm
One can also prove that the map
$$
\widetilde{\Psi}_\sigma:K\times \Omega_\sigma^- \times \Omega_\sigma^+ \to G,
\qquad
\widetilde{\Psi}_\sigma(k,b,a)=k\,P(b^{1/2})\,P(a^{1/2}),
$$
is a real analytic diffeomorphism, with inverse
$$
\widetilde{\Psi}_\sigma^{-1}(g)=\bigl(g\,P(a^{-1/2})\,P(b^{-1/2}),\, b,\, a\bigr),
$$
where
$$
x:=\bigl((g^*g)e\bigr)^{1/2},
\qquad
a:=x\gm \sigma(x)\in \Omega_\sigma^+,
\qquad
b:=x\sgm \sigma(x^{-1})\in \Omega_\sigma^-.
$$
Equivalently, every $g\in G$ admits a unique factorization
$$
g=k\,P(b^{1/2})\,P(a^{1/2}),
\qquad
k\in K,\quad b\in \Omega_\sigma^-,\quad a\in \Omega_\sigma^+.
$$
    }
\end{remark}

\section{The pulled-back $\ell^2$-metric and its midpoint map}\label{sec:l2metric}

Let $d^\pm=d_{\mid\Omega_\sigma^\pm}$ denote the intrinsic distances on the Hadamard manifolds $\Omega_\sigma^\pm$. For
$$
x=f_\sigma(a_1,b_1),
\qquad
y=f_\sigma(a_2,b_2),
$$
define
$$
d_\sigma(x,y):=\Bigl(d^+(a_1,a_2)^2+d^-(b_1,b_2)^2\Bigr)^{1/2}.
$$

\begin{proposition}\label{thm:dsigma}
The metric space $(\Omega,d_\sigma)$ is Hadamard. The map
$$
f_\sigma : \bigl(\Omega_\sigma^+\times \Omega_\sigma^-,\ ((a_1,b_1),(a_2,b_2))\mapsto (d^+(a_1,a_2)^2+d^-(b_1,b_2)^2)^{1/2}\bigr) \to (\Omega,d_\sigma)
$$
is an isometry.

Moreover, if $x=f_\sigma(a_1,b_1)$ and $y=f_\sigma(a_2,b_2)$, then the unique $d_\sigma$-geodesic joining $x$ and $y$ is
$$
m_\sigma(t;x,y):=f_\sigma(a_1\gm_t  a_2,\ b_1\gm_t  b_2),
\qquad 0\le t\le 1.
$$
In particular, the midpoint is
$$
m_\sigma(x,y):=m_\sigma\Bigl(\frac12;x,y\Bigr)=f_\sigma(a_1\gm a_2,\ b_1\gm b_2).
$$
That is,
$$
m_\sigma(x,y)=P(a_*^{1/2})\,b_*,
$$
where
$$a_*:=\left(x\gm\sigma(x)\right)\gm\left(y\gm\sigma(y)\right),\;\;
b_*:=\left(x\sgm\sigma(x^{-1})\right)\gm\left(y\sgm\sigma(y^{-1})\right).$$
  
\end{proposition}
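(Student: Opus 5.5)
The plan is to transport everything through the bijection $f_\sigma$ of Theorem~\ref{thm:factorization}. By construction, $d_\sigma$ is defined so that $d_\sigma(f_\sigma(a_1,b_1),f_\sigma(a_2,b_2))$ equals the $\ell^2$-product distance of $(a_1,b_1)$ and $(a_2,b_2)$. Since $f_\sigma$ is a bijection, $d_\sigma$ is a genuine metric on $\Omega$, and $f_\sigma$ is an isometry onto $(\Omega,d_\sigma)$ essentially by definition. The first step is therefore to record this. Positivity, symmetry and the triangle inequality are then inherited from the product metric, and $f_\sigma^{-1}$ serves as a well-defined inverse.

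Next, I would show that the product $(\Omega_\sigma^+\times\Omega_\sigma^-,d_{\ell^2})$ is a Hadamard space. By Proposition~\ref{prop:totgeod}, each factor $(\Omega_\sigma^\pm,d^\pm)$ is a Hadamard manifold, hence a complete CAT(0) space. I would invoke the standard fact that an $\ell^2$-product of CAT(0) spaces is CAT(0) (e.g.\ \cite{Ballmann-NPC,Jost-NPC}), and that completeness passes to finite products. As an alternative route, one could observe that the product of two Hadamard manifolds with the product Riemannian metric is again a Hadamard manifold: it is simply connected, complete, and has sectional curvature $\le 0$, since mixed planes have curvature $0$. Its Riemannian distance is exactly the $\ell^2$-combination. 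Either way, transporting through the isometry $f_\sigma$ shows that $(\Omega,d_\sigma)$ is Hadamard.

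For geodesics, I would use the fact that geodesics in an $\ell^2$-product of uniquely geodesic spaces are exactly the pairs of constant-speed geodesics in the factors, parametrized proportionally on $[0,1]$. To see this, note that if $\gamma=(\alpha,\beta)$ is a geodesic, then the triangle inequality in each factor combined with Cauchy--Schwarz forces equality, so each component is a constant-speed geodesic. Conversely, a pair of linearly parametrized geodesics satisfies $d(\gamma(s),\gamma(t))=|s-t|\,d(\gamma(0),\gamma(1))$. Uniqueness follows from uniqueness in CAT(0) spaces. Since the factors are totally geodesic in $\Omega$, the intrinsic geodesic in $\Omega_\sigma^\pm$ between two points coincides with the ambient one. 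By Proposition~\ref{prop:geodesic}, that ambient geodesic is $t\mapsto a_1\gm_t a_2$, and it stays in $\Omega_\sigma^+$ by geodesic convexity; the same holds for the $b$'s. Applying $f_\sigma$ then yields $m_\sigma(t;x,y)$.

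The midpoint formula follows at $t=1/2$ by substituting $a_i=x\gm\sigma(x)$, $y\gm\sigma(y)$ and $b_i=x\sgm\sigma(x^{-1})$, $y\sgm\sigma(y^{-1})$ from \eqref{eq:inv-form}. The only mild subtlety is the identification of the intrinsic distance $d^\pm$ with the restriction of $d$. This holds because the submanifolds are geodesically convex: the ambient minimizing geodesic lies inside them, so the induced length metric agrees with the restricted metric. I expect the main obstacle to be the clean justification of the product-geodesic characterization, where the equality case of Cauchy--Schwarz must be handled carefully.
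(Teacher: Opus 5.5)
Your proposal is correct and follows essentially the same route as the paper. By construction, $d_\sigma$ makes $f_\sigma$ an isometry. The $\ell^2$-product of the Hadamard factors $(\Omega_\sigma^\pm,d^\pm)$ is Hadamard: the paper gets this by adding the semiparallelogram inequalities of the two factors, whereas you cite the standard CAT(0) or Riemannian product fact. Product geodesics are componentwise the ambient weighted geometric means, because $\Omega_\sigma^\pm$ are totally geodesic. Your extra care in identifying $d^\pm$ with the restricted metric via geodesic convexity, and in characterizing product geodesics, makes explicit what the paper states in one line.
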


\begin{proof}
The $\ell^2$-product of the Hadamard spaces $(\Omega_\sigma^\pm,d^\pm)$ is again Hadamard for the metric
$$
\delta((a_1,b_1),(a_2,b_2))
:=\bigl(d^+(a_1,a_2)^2+d^-(b_1,b_2)^2\bigr)^{1/2}.
$$
This follows by adding the semiparallelogram inequalities in the two factors. Since $f_\sigma$ is a bijection, the definition of $d_\sigma$ is well posed, and
$$
d_\sigma(f_\sigma(a_1,b_1),f_\sigma(a_2,b_2))
=\delta((a_1,b_1),(a_2,b_2)).
$$
Thus $f_\sigma$ is an isometry from the product Hadamard space $(\Omega_\sigma^+\times\Omega_\sigma^-,\delta)$ onto $(\Omega,d_\sigma)$, and $(\Omega,d_\sigma)$ is Hadamard.

In the product space, the unique geodesic from $(a_1,b_1)$ to $(a_2,b_2)$ is
$$
t\longmapsto (a_1\gm_t a_2 , b_1\gm_t b_2),
$$
because $\Omega_\sigma^\pm$ are totally geodesic in $(\Omega,d)$ and their intrinsic geodesics
are the ambient weighted geometric means. Transporting this geodesic by the isometry $f_\sigma$
gives the stated formula for $m_\sigma(t;x,y)$, and hence for the midpoint $m_\sigma(x,y)$.

 \end{proof}

\begin{proposition}\label{prop:properties-msigma}
For all $x,y,x',y'\in\Omega$ and all $\alpha,\beta>0$, the mean $m_\sigma$ has the following properties:

 \begin{enumerate}
\item $m_\sigma(x,x)=x$;

\item $m_\sigma(x,y)=m_\sigma(y,x)$;

\item $m_\sigma(\alpha x,\beta y)=\sqrt{\alpha\beta}\,m_\sigma(x,y)$;

\item
$
d_\sigma\bigl(m_\sigma(x,y),m_\sigma(x',y')\bigr)
\le \frac12\bigl(d_\sigma(x,x')+d_\sigma(y,y')\bigr).
$
More generally, for every $t\in[0,1]$,
$$
d_\sigma\bigl(m_\sigma(t;x,y),m_\sigma(t;x',y')\bigr)
\le
(1-t)\,d_\sigma(x,x')+t\,d_\sigma(y,y').
$$

\item
$
m_\sigma(x^{-1},y^{-1})^{-1}=m_\sigma(x,y);
$

\item
$
\det(m_\sigma(x,y))=\sqrt{\det(x)\det(y)}.
$
\end{enumerate}

 \end{proposition}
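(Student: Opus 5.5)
Throughout, write $x=f_\sigma(a_1,b_1)$ and $y=f_\sigma(a_2,b_2)$, and similarly for $x',y'$. The plan is to transfer each property of $m_\sigma$ to the two factors through $f_\sigma$, and then use the corresponding property of the geometric mean $\gm$ on $\Omega_\sigma^\pm$ from Proposition~\ref{prop:metricmean}. Items (1) and (2) are then immediate, since $a\gm a=a$ and $\gm$ is symmetric.

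\textbf{Item (4).} This comes from the product structure in Proposition~\ref{thm:dsigma}. In a Hadamard space, the map $t\mapsto d(\gamma(t),\gamma'(t))$ along two geodesics is convex. Hence in each factor
$$
d^\pm\bigl(a_1\gm_t a_2,\,a_1'\gm_t a_2'\bigr)\le (1-t)\,d^\pm(a_1,a_1')+t\,d^\pm(a_2,a_2'),
$$
and likewise for the $b$'s. Squaring, adding the two factor inequalities, and applying Minkowski's inequality in $\bR^2$ gives the $\ell^2$ bound. Since $f_\sigma$ is an isometry, this is the claimed estimate for $d_\sigma$.

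\textbf{Item (5).} First check that the factorization respects inversion. If $x=P(a^{1/2})b$, then $x^{-1}=P(a^{-1/2})b^{-1}$ by Proposition~\ref{prop:quadratic}(3). Here $a^{-1}\in\Omega_\sigma^+$ and $b^{-1}\in\Omega_\sigma^-$, because $\sigma(b^{-1})=\sigma(b)^{-1}=b$. By uniqueness, $f_\sigma^{-1}(x^{-1})=(a^{-1},b^{-1})$. Then $m_\sigma(x^{-1},y^{-1})=f_\sigma(a_1^{-1}\gm a_2^{-1},\,b_1^{-1}\gm b_2^{-1})$. Using $(a\gm b)^{-1}=a^{-1}\gm b^{-1}$ and applying the same inversion rule once more, we get $m_\sigma(x,y)^{-1}$.

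\textbf{Item (3), scaling.} If $x=P(a^{1/2})b$, then $\alpha x=P((\alpha a)^{1/2})b$, since $P(\lambda z)=\lambda^2P(z)$. Also $\alpha a\in\Omega_\sigma^+$ because $\sigma(e)=e$. So $f_\sigma^{-1}(\alpha x)=(\alpha a,b)$. Proposition~\ref{prop:metricmean}(4) then gives $a$-component $\sqrt{\alpha\beta}\,(a_1\gm a_2)$ and unchanged $b$-component, and reassembling yields $\sqrt{\alpha\beta}\,m_\sigma(x,y)$.

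\textbf{Item (6), determinant.} For $z\in\Omega$ one has $\det(P(a^{1/2})z)=\det(a)\det(z)$ (standard: $\det P(u)z=\det(u)^2\det z$). Combined with $\det b=1$ from Lemma~\ref{lem:detone}, this gives $\det x=\det a_1$ and $\det y=\det a_2$. Since $b_1\gm b_2\in\Omega_\sigma^-$, it also has determinant $1$. Therefore
$$
\det m_\sigma(x,y)=\det(a_1\gm a_2)=\sqrt{\det a_1\det a_2}=\sqrt{\det x\,\det y},
$$
using Proposition~\ref{prop:metricmean}(7).

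\textbf{Main obstacle.} The step that needs the most care is item (4). One must justify the convexity of the distance between geodesics in each factor and then combine the two factors correctly, which is exactly where Minkowski's inequality enters. The other items reduce to the fact that the factorization is compatible with inversion and with scaling.
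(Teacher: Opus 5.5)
Your proposal is correct and follows the paper's proof essentially step by step: you transport each property through $f_\sigma$ to the factors $\Omega_\sigma^\pm$ and invoke the corresponding property of $\gm$ there, namely homogeneity, compatibility with inversion, the determinant identity, and joint convexity of geodesics. You also supply details the paper only asserts: the identities $f_\sigma^{-1}(x^{-1})=(a^{-1},b^{-1})$ and $f_\sigma^{-1}(\alpha x)=(\alpha a,b)$, and the Minkowski step combining the two factor estimates in (4). One small point: $\alpha a\in\Omega_\sigma^+$ holds simply because $\sigma$ is linear, not because $\sigma(e)=e$.
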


\begin{proof}
All assertions are inherited from the corresponding properties of the weighted geometric
mean on the Hadamard factors $(\Omega_\sigma^\pm,d^\pm)$.

Indeed, write
$
x=f_\sigma(a_1,b_1)
$
and
$
y=f_\sigma(a_2,b_2).
$
Then
$$
m_\sigma(t;x,y)=f_\sigma(a_1\gm_t a_2,\ b_1\gm_t b_2).
$$
Items (1) and (2) are immediate.

For (3), observe that
$$
q_\sigma^+(\alpha x)=\alpha q_\sigma^+(x),
\qquad
q_\sigma^-(\alpha x)=q_\sigma^-(x),
$$
and use the homogeneity of $\gm$ together with
$$
f_\sigma(\lambda a,b)=\lambda f_\sigma(a,b).
$$

For (4), geodesic interpolation in any Hadamard space satisfies the joint convexity estimate
$$
d^+\bigl(a_1\gm_t a_2,\ a_1'\gm_t a_2'\bigr)
\le
(1-t)\,d^+(a_1,a_1')+t\,d^+(a_2,a_2'),
$$
and similarly on $\Omega_\sigma^-$. Combining these two estimates and using the definition of $d_\sigma$ gives the stated inequality.

For (5), one has
$$
x^{-1}=f_\sigma(a_1^{-1},b_1^{-1}),
\qquad
y^{-1}=f_\sigma(a_2^{-1},b_2^{-1}),
$$
and Proposition~\ref{prop:metricmean}(3) yields
$$
a_1^{-1}\gm a_2^{-1}=(a_1\gm a_2)^{-1},
\qquad
b_1^{-1}\gm b_2^{-1}=(b_1\gm b_2)^{-1}.
$$
Hence
$$
m_\sigma(x^{-1},y^{-1})=m_\sigma(x,y)^{-1}.
$$

Finally, by Lemma~\ref{lem:detone},
$$
\det(x)=\det(a_1),
\qquad
\det(y)=\det(a_2).
$$
Therefore
$$
\det\bigl(m_\sigma(x,y)\bigr)
=
\det(a_1\gm a_2)\det(b_1\gm b_2)
=
\sqrt{\det(a_1)\det(a_2)}
=
\sqrt{\det(x)\det(y)}.
$$

\end{proof}

\begin{remark}{\rm
If $\sigma=\operatorname{id}$, then $\Omega_\sigma^-=\{e\}$, $d_\sigma=d$, and $m_\sigma(x,y)=x\gm y$. More generally, if $x,y\in\Omega_\sigma^+$ or $x,y\in\Omega_\sigma^-$, then
$m_\sigma(x,y)={x}\gm{y}$.
}
\end{remark}

\section{The Karcher mean associated with $\sigma$}\label{sec:karcher}

For $a_1,\dots,a_n\in \Omega_\sigma^+$ and $b_1,\dots,b_n\in \Omega_\sigma^-$, let
$$
\Lambda_n^+(a_1,\dots,a_n)
:=
\Argmin_{a\in \Omega_\sigma^+}\frac1n\sum_{i=1}^n d^+(a,a_i)^2
$$
and
$$
\Lambda_n^-(b_1,\dots,b_n)
:=
\Argmin_{b\in \Omega_\sigma^-}\frac1n\sum_{i=1}^n d^-(b,b_i)^2.
$$
These minimizers exist and are unique because $\Omega_\sigma^\pm$ are Hadamard spaces \cite{Karcher,BH}.

\begin{definition}
For $x_i=f_\sigma(a_i,b_i)\in\Omega$, $i=1,\dots,n$, define
$$
\Lambda_n^\sigma(x_1,\dots,x_n)
:=
f_\sigma\bigl(\Lambda_n^+(a_1,\dots,a_n),\ \Lambda_n^-(b_1,\dots,b_n)\bigr).
$$
In other words,
$$
\Lambda_n^\sigma(x_1,\dots,x_n)
=
P(A_*^{1/2})\,B_*,
$$
where
$$
A_*:=\Lambda_n^+\left({x_1}\gm{\sigma(x_1)},\dots, {x_n}\gm{\sigma(x_n)}\right),
\quad
B_*:=\Lambda_n^-\left({x_1}\sgm{\sigma(x_1^{-1})},\dots, {x_n}\sgm{\sigma(x_n^{-1})}\right).
$$
\end{definition}

\begin{theorem}\label{thm:karcher}
The mean $\Lambda_n^\sigma$ is the Karcher mean in the Hadamard space $(\Omega,d_\sigma)$; namely,
$$
\Lambda_n^\sigma(x_1,\dots,x_n)
=
\Argmin_{x\in \Omega}\frac1n\sum_{i=1}^n d_\sigma(x,x_i)^2.
$$
\end{theorem}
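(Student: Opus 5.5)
The plan is to move the minimization problem through the isometry $f_\sigma$ of Proposition~\ref{thm:dsigma}, so that it splits into two independent problems, one on each factor. Every $x\in\Omega$ can be written uniquely as $x=f_\sigma(a,b)$ with $(a,b)\in\Omega_\sigma^+\times\Omega_\sigma^-$; this is Theorem~\ref{thm:factorization}. By the definition of $d_\sigma$,
\begin{equation*}
\frac1n\sum_{i=1}^n d_\sigma(x,x_i)^2=\frac1n\sum_{i=1}^n d^+(a,a_i)^2+\frac1n\sum_{i=1}^n d^-(b,b_i)^2=:F^+(a)+F^-(b).
\end{equation*}
Since $f_\sigma$ is a bijection, minimizing the left-hand side over $x\in\Omega$ is the same as minimizing $F^+(a)+F^-(b)$ over the product $\Omega_\sigma^+\times\Omega_\sigma^-$.

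The objective is a sum of a function of $a$ alone and a function of $b$ alone. Its infimum is therefore $\inf F^+ + \inf F^-$. A pair $(a,b)$ minimizes the sum exactly when $a$ minimizes $F^+$ and $b$ minimizes $F^-$.

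By the cited results for the Hadamard spaces $\Omega_\sigma^\pm$ (Proposition~\ref{prop:totgeod} together with \cite{Karcher,BH}), $F^\pm$ has a unique minimizer. These minimizers are $\Lambda_n^+(a_1,\dots,a_n)$ and $\Lambda_n^-(b_1,\dots,b_n)$ respectively. So the unique minimizer on the product is the pair $(\Lambda_n^+,\Lambda_n^-)$. Applying $f_\sigma$ gives
\begin{equation*}
\Argmin_{x\in\Omega}\frac1n\sum_{i=1}^n d_\sigma(x,x_i)^2=f_\sigma\bigl(\Lambda_n^+(a_1,\dots,a_n),\Lambda_n^-(b_1,\dots,b_n)\bigr)=\Lambda_n^\sigma(x_1,\dots,x_n).
\end{equation*}
The explicit form $P(A_*^{1/2})B_*$ then comes from substituting $a_i=q_\sigma^+(x_i)$ and $b_i=q_\sigma^-(x_i)$, using \eqref{eq:inv-form}.

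There is no serious obstacle here. The one point needing care is that the minimizer on $\Omega$ is unique, not just that one exists. Uniqueness can be checked in either of two ways:
\begin{itemize}
\item Directly from the decoupling: if a pair $(a,b)$ failed to minimize $F^+$ or $F^-$, we could replace that coordinate by the factor minimizer and strictly lower the sum.
\item Abstractly: $(\Omega,d_\sigma)$ is Hadamard by Proposition~\ref{thm:dsigma}, and a Karcher mean in a Hadamard space is unique.
\end{itemize}
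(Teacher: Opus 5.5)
Your proposal is correct and follows the paper's proof: you write $x=f_\sigma(a,b)$, use that $f_\sigma$ is an isometry onto $(\Omega,d_\sigma)$ to split the objective into $F^+(a)+F^-(b)$, and minimize each term independently at $\Lambda_n^+(a_1,\dots,a_n)$ and $\Lambda_n^-(b_1,\dots,b_n)$. You also spell out the uniqueness step: the minimizer set of the decoupled sum is the product of the two singleton minimizer sets. The paper leaves this implicit.
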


\begin{proof}
Let $x=f_\sigma(a,b)$. Since $f_\sigma$ is an isometry for $d_\sigma$,
$$
d_\sigma(x,x_i)^2 = d^+(a,a_i)^2+d^-(b,b_i)^2.
$$
Hence
$$
\frac1n\sum_{i=1}^n d_\sigma(x,x_i)^2
=
\frac1n\sum_{i=1}^n d^+(a,a_i)^2
+
\frac1n\sum_{i=1}^n d^-(b,b_i)^2.
$$
The two terms are minimized independently at $\Lambda_n^+(a_1,\dots,a_n)$ and $\Lambda_n^-(b_1,\dots,b_n)$, respectively.
\end{proof}

\begin{proposition}\label{prop:karcherprops}
For all $x_1,\dots,x_n\in \Omega$ and all $\alpha_1,\dots,\alpha_n>0$:
\begin{enumerate}
\item $\Lambda_n^\sigma(x,\dots,x)=x$;
\item $\Lambda_2^\sigma(x,y)=m_\sigma(x,y)$;
\item $\Lambda_n^\sigma(x_{\tau(1)},\dots,x_{\tau(n)})=\Lambda_n^\sigma(x_1,\dots,x_n)$ for every permutation $\tau$;
\item
$
d_\sigma\bigl(\Lambda_n^\sigma(x_1,\dots,x_n),\Lambda_n^\sigma(y_1,\dots,y_n)\bigr)
\le
\frac1n\sum_{i=1}^n d_\sigma(x_i,y_i);
$

\item
$
\Lambda_n^\sigma(\alpha_1x_1,\dots,\alpha_n x_n)
=
(\alpha_1\cdots \alpha_n)^{1/n}\Lambda_n^\sigma(x_1,\dots,x_n);
$
\item
$
\Lambda_n^\sigma(x_1^{-1},\dots,x_n^{-1})^{-1}
=
\Lambda_n^\sigma(x_1,\dots,x_n);
$
\item
$
\det\Lambda_n^\sigma(x_1,\dots,x_n)
=
\prod_{i=1}^n (\det x_i)^{1/n}.
$
\end{enumerate}
\end{proposition}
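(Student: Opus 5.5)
The plan is to reduce each item, through the isometry $f_\sigma$ of Proposition~\ref{thm:dsigma} and the definition of $\Lambda_n^\sigma$, to the corresponding property of the Karcher means $\Lambda_n^\pm$ on the Hadamard factors $\Omega_\sigma^\pm$. Throughout, write $x_i=f_\sigma(a_i,b_i)$ with $a_i=q_\sigma^+(x_i)$ and $b_i=q_\sigma^-(x_i)$, and similarly $y_i=f_\sigma(a_i',b_i')$.

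Items (1)--(3) are formal consequences of the fact that $\Lambda_n^\pm$ are barycenters. For (1), the minimizer of $a\mapsto\sum d^+(a,a_i)^2$ with all $a_i$ equal to $a_1$ is $a_1$, and likewise for $b$. For (3), the objective function is symmetric in its arguments. For (2), in a Hadamard space the minimizer of $d(z,u)^2+d(z,v)^2$ is the geodesic midpoint. On $\Omega_\sigma^\pm$ this midpoint is $u\gm v$, since these submanifolds are totally geodesic (Proposition~\ref{prop:totgeod}), so their intrinsic geodesics are the ambient ones. Comparing with the formula for $m_\sigma$ in Proposition~\ref{thm:dsigma} gives (2).

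For (4), I would use the standard contraction property of barycenters in Hadamard spaces, valid for the $n$-point Karcher mean in any Hadamard space:
$$d\bigl(\Lambda_n(a_1,\dots,a_n),\Lambda_n(a_1',\dots,a_n')\bigr)\le \frac1n\sum_{i=1}^n d(a_i,a_i').$$
A reference is Sturm's Jensen-type inequality, or \cite{BH}. I would apply this directly in the Hadamard space $(\Omega,d_\sigma)$, because Theorem~\ref{thm:karcher} identifies $\Lambda_n^\sigma$ with the Karcher mean there. This avoids having to combine two $\ell^1$-type estimates with the $\ell^2$-product, which is the one real subtlety. Splitting the estimate into factors would give only $\bigl((\sum_i \delta_i^+/n)^2+(\sum_i \delta_i^-/n)^2\bigr)^{1/2}$; bounding this by $\frac1n\sum_i\bigl((\delta_i^+)^2+(\delta_i^-)^2\bigr)^{1/2}$ requires Minkowski's inequality in $\bR^2$, which also works. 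This step, which rests on importing the contraction property, is the main obstacle.

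For (5), I would first note that $q_\sigma^+(\alpha x)=\alpha q_\sigma^+(x)$ and $q_\sigma^-(\alpha x)=q_\sigma^-(x)$, as in the proof of Proposition~\ref{prop:properties-msigma}. It then suffices to show $\Lambda_n^+(\alpha_1a_1,\dots,\alpha_na_n)=(\prod\alpha_i)^{1/n}\Lambda_n^+(a_1,\dots,a_n)$. I would verify this through the Karcher equation $\sum_i\log\bigl(P(a^{-1/2})a_i\bigr)=0$, which characterizes the minimizer because the factors are totally geodesic. Replacing $a_i$ by $\alpha_ia_i$ and $a$ by $\lambda a$, with $\lambda=(\prod\alpha_i)^{1/n}$, adds $\sum_i\log(\alpha_i/\lambda)\,e=0$ to the left-hand side, so the equation still holds. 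Then $f_\sigma(\lambda a,b)=\lambda f_\sigma(a,b)$ finishes (5).

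For (6), inversion $j$ maps $\Omega_\sigma^\pm$ to themselves and is an isometry there, so it commutes with the barycenters $\Lambda_n^\pm$. Together with $x^{-1}=f_\sigma(a^{-1},b^{-1})$, already observed in Proposition~\ref{prop:properties-msigma}(5), this gives (6).

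For (7), Lemma~\ref{lem:detone} gives $\det\Lambda_n^\sigma=\det\Lambda_n^+(a_1,\dots,a_n)$ and $\det x_i=\det a_i$. Taking the Jordan trace of the Karcher equation gives $\sum_i\log\det\bigl(P(a^{-1/2})a_i\bigr)=0$. Since $\det(P(y)z)=\det(y)^2\det(z)$, this yields $\det a=\prod_i(\det a_i)^{1/n}$, which is (7).
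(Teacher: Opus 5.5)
Your proposal is correct and follows the paper's route: each item is reduced, through the isometry $f_\sigma$ and the definition of $\Lambda_n^\sigma$, to the corresponding property of the factor Karcher means $\Lambda_n^\pm$ on $\Omega_\sigma^\pm$. The differences are only in detail. You verify homogeneity and the determinant identity directly from the Karcher equation instead of citing them. For (4), you apply the contraction property in the product Hadamard space $(\Omega,d_\sigma)$, or combine the factor estimates by Minkowski's inequality, which makes explicit a step the paper's one-line argument leaves implicit.
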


\begin{proof}
All statements follow from the corresponding properties of the Karcher mean on the two factors.

For (2), note that in any Hadamard space the Karcher mean of two equally weighted points is their midpoint.

For (4), the non-expansiveness estimate follows from the corresponding estimate for Karcher means
in Hadamard spaces, applied to $\Omega_\sigma^\pm$ and transported by the isometry $f_\sigma$.

For (5), use
$$
q_\sigma^+(\alpha_i x_i)=\alpha_i q_\sigma^+(x_i),
\qquad
q_\sigma^-(\alpha_i x_i)=q_\sigma^-(x_i),
$$
together with the homogeneity of the Karcher mean on the symmetric cone $\Omega_\sigma^+$ and the identity $f_\sigma(\lambda a,b)=\lambda f_\sigma(a,b)$.

For (6), write $x_i=f_\sigma(a_i,b_i)$. Then
$$
x_i^{-1}=f_\sigma(a_i^{-1},b_i^{-1}).
$$
Since inversion is an isometry on each Hadamard factor, the Karcher mean is equivariant under inversion, and therefore
$$
\Lambda_n^\sigma(x_1^{-1},\dots,x_n^{-1})
=
f_\sigma\bigl(\Lambda_n^+(a_1,\dots,a_n)^{-1},\Lambda_n^-(b_1,\dots,b_n)^{-1}\bigr)
=
\Lambda_n^\sigma(x_1,\dots,x_n)^{-1}.
$$
Finally, (7) follows from Lemma~\ref{lem:detone} and the determinant identity for the Karcher mean on the symmetric cone $\Omega_\sigma^+$, namely,
$$
\det\Lambda_n^\sigma(x_1,\dots,x_n)
=
\det\Lambda_n^+(a_1,\dots,a_n)
=
\prod_{i=1}^n (\det a_i)^{1/n}
=
\prod_{i=1}^n (\det x_i)^{1/n}.
$$

\end{proof}

\section{Examples: explicit Mostow decompositions}\label{sec:examples}
In this section, we illustrate the factorization and Mostow decomposition in three concrete settings: positive definite Hermitian matrices, positive definite real symmetric matrices, and the Lorentz cone (spin factors).

\subsection{The Hermitian cone}

Let $V=\Herm(r,\bC)$, with Jordan product $X\circ Y=\frac12(XY+YX)$ and inner product $\langle X,Y\rangle=\operatorname{tr}(XY)$. In this case, $P(X)Y=XYX$, and the corresponding symmetric cone is
$$
\Omega=\Herm^{++}(r,\bC):= \{ X \in \operatorname{Herm}(r,\mathbb{C}) : X \succ 0 \},
$$
the cone of positive definite Hermitian matrices.

If $Q$ is a Hermitian unitary matrix satisfying $Q^2=I$, then
$$
\sigma(X)=P(Q)X=QXQ
$$
is a special involution. One has
$$
V^+=\{X\in \Herm(r,\bC): QXQ=X\},
\qquad
V^-=\{X\in \Herm(r,\bC): QXQ=-X\},
$$
and
$$
\Omega_\sigma^+=\{A\in \Omega : QAQ=A\},
\qquad
\Omega_\sigma^-=\{B\in \Omega : QBQ=B^{-1}\}.
$$
The factorization theorem becomes
$$
X=A^{1/2}BA^{1/2},
$$
with inverse
$$
X \longmapsto \bigl(X\gm (QXQ),\ X\sgm (QX^{-1}Q)\bigr).
$$
This recovers the matrix factorization from \cite{Lim}.

\subsection{The real symmetric cone}

Let $V=\Sym(r,\bR)$, with Jordan product $x\circ y=\frac12(xy+yx)$ and inner product $\inner{x}{y}=\operatorname{tr}(xy)$. In this case, $P(x)y=xyx$, and the corresponding symmetric cone is
$$\Omega=\Sym^{++}(r,\bR):= \{ x \in \operatorname{Sym}(r,\mathbb{R}) : x \succ 0 \},
$$
the cone of positive definite symmetric matrices.

Fix an idempotent $c\in V$. Since $c=c^\top$ and $c^2=c$, it is the orthogonal projection onto a subspace $E\subset\mathbb R^r$. Choose an orthonormal basis in which
 $$
c=\begin{pmatrix} I_p & 0\\ 0 & 0\end{pmatrix},
\qquad p=\mathrm{rank}(c),\qquad q=r-p.
$$
Then
$$
w=2c-e
=
\begin{pmatrix} I_p & 0\\ 0 & -I_q\end{pmatrix}
=:J_{p,q},
\qquad w^2=I_r.
$$
Hence the special involutive automorphism $\sigma=P(w)$ acts by congruence
$$
\sigma(x)=P(w)x=J_{p,q}xJ_{p,q},\qquad x\in\Sym(r,\mathbb R).
$$
In fact, $\sigma$ is a \emph{Cartan involution}, meaning that the bilinear form $(x,y)\mapsto\inner{x}{\sigma(y)}$ is positive definite; see \cite{Kayoya,Helwig}.

Write any $x\in V$ in block form relative to $\mathbb R^r=E\oplus E^\perp$:
$$
x=
\begin{pmatrix}
A & B\\
B^\top & D
\end{pmatrix},
\qquad A\in\Sym(p,\mathbb R),\ D\in\Sym(q,\mathbb R),\ B\in\mathbb R^{p\times q}.
$$
Then
$$
\sigma(x)=J_{p,q}xJ_{p,q}=
\begin{pmatrix}
A & -B\\
-B^\top & D
\end{pmatrix}.
$$
Therefore
\begin{eqnarray*}
V^+=\{x:\sigma(x)=x\}
&=&
\left\{
\begin{pmatrix} A & 0\\ 0 & D\end{pmatrix}
: A\in\Sym(p,\mathbb R),\ D\in\Sym(q,\mathbb R)
\right\}\\
&\simeq& \Sym(p,\mathbb R)\oplus \Sym(q,\mathbb {R})
\end{eqnarray*}
and
\begin{eqnarray*}
V^-=\{x:\sigma(x)=-x\}
&=&
\left\{
\begin{pmatrix} 0 & B\\ B^\top & 0\end{pmatrix}
: B\in\mathbb R^{p\times q}
\right\}\\
&\simeq& \operatorname{Mat}(p\times q,\mathbb R).
\end{eqnarray*}
In particular, $V^+$ is a Euclidean Jordan algebra, though not necessarily simple, and $V^-$ is a Jordan triple system.

Since $V^+$ consists of block diagonal symmetric matrices, we obtain
\begin{eqnarray*}
\Omega_\sigma^+=\exp V^+
&=&
\left\{
\begin{pmatrix}
A & 0\\
0 & D
\end{pmatrix}
: A\in\Sym^{++}(p,\mathbb R),\ D\in\Sym^{++}(q,\mathbb R)
\right\}\\
&\simeq& \Sym^{++}(p,\mathbb R)\times \Sym^{++}(q,\mathbb R),
\end{eqnarray*}
which is a product of symmetric cones.

On the other hand, an element $b\in\Omega$ belongs to $\Omega_\sigma^-$ if and only if $\sigma(b)=b^{-1}$, that is, $wbw=b^{-1}$. Since $w^2=I$, this is also equivalent to $bwb=w$. Thus
$$
\Omega_\sigma^-
=
\{b\in\Sym^{++}(r,\mathbb R): b w b=w\}
=
\Sym^{++}(r,\mathbb R)\cap O(p,q),
$$
the set of positive definite symmetric matrices that preserve the quadratic form of signature $(p,q)$. It is therefore a Makarevi\v c symmetric space of noncompact type; see \cite{Bertram}.\\

The following   standard functional  identity will be used in the sequel.
\begin{lemma}\label{lem:func}
Let $S\in \bR^{p\times q}$ and let $f$ be continuous on a neighborhood of $\operatorname{spec}(SS^\top)\cup \operatorname{spec}(S^\top S)$. Then
$$
f(SS^\top)\,S = S\,f(S^\top S).
$$
\end{lemma}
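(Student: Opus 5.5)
The plan is to reduce the identity to polynomials via the singular value decomposition, and then pass to continuous $f$ by approximation. First take $f$ to be a polynomial, $f(t)=\sum_k c_k t^k$. Then $f(SS^\top)S=\sum_k c_k (SS^\top)^k S$. A trivial induction on $k$, using associativity of matrix multiplication, gives
\[
(SS^\top)^k S = S(S^\top S)^k,
\]
since $(SS^\top)^{k}S = (SS^\top)^{k-1}S(S^\top S)$. Summing over $k$ proves the identity for polynomials.

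Next let $f$ be continuous on a neighborhood of the compact set $F:=\operatorname{spec}(SS^\top)\cup\operatorname{spec}(S^\top S)$. This is a finite subset of $[0,\infty)$, and the two spectra differ at most by the eigenvalue $0$. Because $F$ is finite, we can do better than Weierstrass approximation: Lagrange interpolation gives a polynomial $p$ with $p=f$ on $F$. For a symmetric matrix $M$, the functional calculus $f(M)$ depends only on the values of $f$ on $\operatorname{spec}(M)$; this is clear from the spectral decomposition $M=\sum_\lambda \lambda E_\lambda$, which gives $f(M)=\sum_\lambda f(\lambda)E_\lambda$. Hence $f(SS^\top)=p(SS^\top)$ and $f(S^\top S)=p(S^\top S)$, and the polynomial case finishes the proof.

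As an alternative check, one could use the SVD $S=U\Sigma W^\top$. This gives $SS^\top=U\Sigma\Sigma^\top U^\top$ and $S^\top S=W\Sigma^\top\Sigma W^\top$. The identity then reduces to $f(\Sigma\Sigma^\top)\Sigma=\Sigma f(\Sigma^\top\Sigma)$ for a rectangular diagonal $\Sigma$, which is checked entrywise. Both sides have $(i,i)$ entry $f(s_i^2)s_i$, and the nondiagonal positions vanish on both sides.

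No step is a real obstacle. The only point needing care is that $f$ is merely continuous, and this is handled by the interpolation remark, which makes the approximation exact on the spectrum.
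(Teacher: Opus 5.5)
Your proof is correct, and your main argument differs from the paper's. The paper uses only the singular value decomposition $S=U\Sigma V^\top$: it moves the functional calculus through the orthogonal factors and checks $f(\Sigma\Sigma^\top)\Sigma=\Sigma f(\Sigma^\top\Sigma)$ for the rectangular diagonal $\Sigma$. That is exactly your ``alternative check''.

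Your primary route is different. You prove the purely algebraic relation $(SS^\top)^kS=S(S^\top S)^k$ and extend it linearly to polynomials. You then use that $f(M)$ depends only on the values of $f$ on $\operatorname{spec}(M)$, with one Lagrange interpolant on the finite set $\operatorname{spec}(SS^\top)\cup\operatorname{spec}(S^\top S)$.

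This route has three advantages:
\begin{itemize}
\item It needs no diagonalization of $S$, and no invariance $f(UMU^\top)=Uf(M)U^\top$, which the paper uses implicitly.
\item It makes clear that only the values of $f$ on the union of the two spectra matter, so the ``neighborhood'' hypothesis is more than needed.
\item It carries over unchanged to bounded operators between Hilbert spaces. There you replace exact interpolation by Weierstrass approximation on the compact spectra and use norm continuity of the continuous functional calculus.
\end{itemize}
The SVD argument is more concrete and shows the entries $f(s_i^2)s_i$ directly, but it is tied to a diagonalization of $S$.
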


\begin{proof}
Take a singular value decomposition $S=U\Sigma V^\top$. Then
$$
SS^\top = U(\Sigma\Sigma^\top)U^\top,
\qquad
S^\top S = V(\Sigma^\top \Sigma)V^\top.
$$
Thus
$$
f(SS^\top)\,S = U f(\Sigma\Sigma^\top)\Sigma V^\top,
\qquad
S\,f(S^\top S)=U\Sigma f(\Sigma^\top \Sigma)V^\top.
$$
Because $\Sigma$ is diagonal in rectangular form, one has $f(\Sigma\Sigma^\top)\Sigma=\Sigma f(\Sigma^\top\Sigma)$, so the two expressions agree.
\end{proof}

\begin{proposition}\label{prop:realmatrix}
Let
$$
x=
\begin{pmatrix}
A & B\\
B^\top & D
\end{pmatrix}
\in \Sym^{++}(p+q,\bR).
$$
Set
$$
S:=A^{-1/2}BD^{-1/2},
\qquad
C:=(I_p-SS^\top)^{-1/2},
\qquad
E:=(I_q-S^\top S)^{-1/2},
$$
$$
\widetilde F:=CS=SE,
\qquad
\widetilde b:=
\begin{pmatrix}
C & \widetilde F\\
\widetilde F^\top & E
\end{pmatrix},
\qquad
m:=
\begin{pmatrix}
A^{1/2}C^{-1/2} & 0\\
0 & D^{1/2}E^{-1/2}
\end{pmatrix}.
$$
Then $\widetilde b\in \Omega_\sigma^-$ and
$$
x=m\widetilde b\,m^\top.
$$
If $x=a^{1/2}ba^{1/2}$ is the canonical factorization from Corollary~\ref{cor:mostow}, then
$$
a=mm^\top=
\begin{pmatrix}
A\gm (A-BD^{-1}B^\top) & 0\\
0 & D\gm (D-B^\top A^{-1}B)
\end{pmatrix},
$$
$$
b=a^{-1/2}xa^{-1/2}.
$$
Moreover, if $m=a^{1/2}k$ is the polar decomposition of $m$, then $k\in \OO(p)\times \OO(q)$ and
$$
b=k\widetilde b\,k^\top.
$$
\end{proposition}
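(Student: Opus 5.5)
We first verify the algebraic identities directly, then reduce the uniqueness statement to Corollary~\ref{cor:mostow}.

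\emph{Step 1: $\widetilde b$ is well defined and lies in $\Omega_\sigma^-$.} Write
$$
x=\operatorname{diag}(A^{1/2},D^{1/2})
\begin{pmatrix} I & S\\ S^\top & I\end{pmatrix}
\operatorname{diag}(A^{1/2},D^{1/2}),
$$
which uses $B=A^{1/2}SD^{1/2}$. Positivity of $x$ forces the middle block matrix to be positive definite. Hence $\|S\|<1$, and so $C$ and $E$ are well defined and positive definite. Lemma~\ref{lem:func} with $f(t)=(1-t)^{-1/2}$ gives $CS=SE$, so $\widetilde F$ is well defined.

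To show $\widetilde b\in\Omega_\sigma^-$, we check the criterion $\widetilde b\,w\,\widetilde b=w$ with $w=J_{p,q}$. Its blocks are $C^2-\widetilde F\widetilde F^\top=C(I-SS^\top)C=I$, $E^2-\widetilde F^\top\widetilde F=I$ similarly, and $C\widetilde F-\widetilde F E=C^2S-CSE=C(CS-SE)=0$.

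Positivity of $\widetilde b$ follows from two facts. Its diagonal block $C$ is positive. Its Schur complement is $E-\widetilde F^\top C^{-1}\widetilde F=E-E S^\top C^{-1} C S$. Using $S^\top S=I-E^{-2}$, this equals $E-E(I-E^{-2})=E^{-1}>0$.

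\emph{Step 2: $x=m\widetilde b m^\top$.} We compare blocks.
\begin{itemize}
\item The $(1,1)$ block is $A^{1/2}C^{-1/2}CC^{-1/2}A^{1/2}=A$.
\item The $(2,2)$ block is $D$ similarly.
\item The $(1,2)$ block is $A^{1/2}C^{-1/2}CSE^{-1/2}D^{1/2}=A^{1/2}C^{1/2}SE^{-1/2}D^{1/2}$.
\end{itemize}
Lemma~\ref{lem:func} with $f(t)=(1-t)^{-1/4}$ gives $C^{1/2}S=SE^{1/2}$, so the $(1,2)$ block equals $A^{1/2}SD^{1/2}=B$.

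\emph{Step 3: identify $a$ and $b$.} Let $m=a_0^{1/2}k$ be the polar decomposition, with $a_0=mm^\top$. Since $m$ is block diagonal, $a_0$ is block diagonal (so $a_0\in\Omega_\sigma^+$) and $k\in \OO(p)\times\OO(q)$. Then
$$
x=a_0^{1/2}(k\widetilde b k^\top)a_0^{1/2}.
$$
Because $k$ commutes with $w$, conjugation by $k$ preserves $\Omega_\sigma^-$, so $k\widetilde b k^\top\in\Omega_\sigma^-$. Uniqueness in Corollary~\ref{cor:mostow} then gives $a=a_0$, $b=k\widetilde bk^\top=a^{-1/2}xa^{-1/2}$.

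\emph{Step 4: explicit blocks of $a$.} It remains to show $A^{1/2}C^{-1}A^{1/2}=A\gm(A-BD^{-1}B^\top)$ and the analogous identity in the $(2,2)$ block. Substituting $B=A^{1/2}SD^{1/2}$,
$$
A-BD^{-1}B^\top=A^{1/2}(I-SS^\top)A^{1/2}=A^{1/2}C^{-2}A^{1/2}.
$$
The congruence formula for $\gm$ then gives
$$
A\gm(A-BD^{-1}B^\top)=A^{1/2}(C^{-2})^{1/2}A^{1/2}=A^{1/2}C^{-1}A^{1/2}.
$$
The $D$-block is symmetric, using $D-B^\top A^{-1}B=D^{1/2}E^{-2}D^{1/2}$.

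The only step requiring care is Step 1: the positivity of $\widetilde b$ and the repeated use of the intertwining identities. The remaining steps are direct computations.
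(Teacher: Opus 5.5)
Your proposal is correct and follows the paper's proof essentially step for step. The shared steps are: positivity of $I-SS^\top$ and $I-S^\top S$; the intertwinings $CS=SE$ and $C^{1/2}S=SE^{1/2}$ from Lemma~\ref{lem:func}; the Schur-complement check $E-\widetilde F^\top C^{-1}\widetilde F=E^{-1}$; the identity $\widetilde b J_{p,q}\widetilde b=J_{p,q}$; the block multiplication; and the uniqueness argument after polar-decomposing $m$. The differences are minor: you obtain $a=mm^\top$ from the uniqueness in Corollary~\ref{cor:mostow} rather than from the paper's computation $x\gm\sigma(x)=m(\widetilde b\gm\widetilde b^{-1})m^\top=mm^\top$, and you verify $C\widetilde F=\widetilde F E$ directly as $C(CS-SE)=0$ instead of invoking Lemma~\ref{lem:func} with $f(t)=(1-t)^{-1}$.
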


\begin{proof}
Since $x\succ 0$, the Schur complements show that $A\succ 0$, $D\succ 0$, and
$$\begin{aligned}
I_q-S^\top S &=D^{-1/2}(D-B^{\top} A^{-1} B)D^{-1/2}\succ 0,\\
I_p-SS^\top &=A^{-1/2}(A-B D^{-1} B^{\top})A^{-1/2}\succ 0.
\end{aligned}
$$
Hence $C$ and $E$ are well defined. The relation $\widetilde F=CS=SE$ follows from Lemma~\ref{lem:func} with $f(t)=(1-t)^{-1/2}$.

We next show that $\widetilde b\succ0$. Since $C\succ0$, the Schur complement criterion reduces the claim to proving
$
E-\widetilde F^\top C^{-1}\widetilde F \succ 0.
$
Using $\widetilde F=CS$, we get
$
\widetilde F^\top C^{-1}\widetilde F
=
S^\top C S.
$
Since $CS=SE$, we also have $S^\top C = E S^\top$, and therefore
$
S^\top C S = E S^\top S.
$
Thus
$$
E-\widetilde F^\top C^{-1}\widetilde F
=
E-ES^\top S
=
E(I_q-S^\top S)
=
E^{-1}\succ 0.
$$
Hence $\widetilde b\succ 0$.

Next,
$$
C^2-\widetilde F\widetilde F^\top
=
C(I_p-SS^\top)C
=
I_p,
$$
$$
E^2-\widetilde F^\top \widetilde F
=
E(I_q-S^\top S)E
=
I_q,
$$
and, by Lemma~\ref{lem:func} with $f(t)=(1-t)^{-1}$,
$$
C\widetilde F=\widetilde F E.
$$
Therefore
$
\widetilde b J_{p,q}\widetilde b = J_{p,q},
$
so $\widetilde b\in \Omega_\sigma^-$.

A direct block multiplication gives
$$
m\widetilde b\,m^\top
=\left(\begin{array}{cc}
A & A^{1 / 2} C^{-1 / 2} \widetilde{F} E^{-1 / 2} D^{1 / 2} \\
D^{1 / 2} E^{-1 / 2} \widetilde{F}^{\top} C^{-1 / 2} A^{1 / 2} & D
\end{array}\right)
$$
For the off-diagonal block, write
$$
A^{1/2}C^{-1/2}\widetilde F E^{-1/2}D^{1/2}
=
A^{1/2}C^{1/2} S E^{-1/2}D^{1/2},
$$
and Lemma~\ref{lem:func}, applied with $f(t)=(1-t)^{-1/4}$, gives
$
C^{1/2}S = S E^{1/2}.
$
Therefore
$$
A^{1/2}C^{1/2} S E^{-1/2}D^{1/2}
=
A^{1/2}S D^{1/2}
=
B.
$$
Hence
$$
m\widetilde b\,m^\top
=
\begin{pmatrix}
A&B\\
B^\top&D
\end{pmatrix}
=x.
$$

Now $\sigma(m)=m$ because $m$ is block diagonal, and $\sigma(\widetilde b)=\widetilde b^{-1}$ because $\widetilde b\in \Omega_\sigma^-$. Hence
$$
\sigma(x)=m\widetilde b^{-1}m^\top.
$$
By congruence invariance of the matrix geometric mean,
$$
x\gm \sigma(x)=m(\widetilde b \gm \widetilde b^{-1})m^\top=mm^\top.
$$
Since $\widetilde b \gm \widetilde b^{-1}=I$, this gives the formula for $a=mm^\top$. The displayed expression for $a$ follows from
$$
A^{1/2}C^{-1}A^{1/2}=A^{1/2}(I_p-SS^\top)^{1/2}A^{1/2}
=
A\gm (A-BD^{-1}B^\top),
$$
and similarly for the lower-right block.

Finally,
$$
x=m\widetilde b\,m^\top=a^{1/2}k\widetilde b\,k^\top a^{1/2},
$$
and $k$ commutes with $J_{p,q}$ because $k$ is block diagonal. Thus $k\widetilde b\,k^\top\in \Omega_\sigma^-$, and uniqueness of the factorization implies
$$
b=k\widetilde b\,k^\top.
$$
The orthogonal matrix $k$ can also be computed explicitly. One obtains

$$
\begin{aligned}
k & =\left(\begin{array}{cc}
\left(A^{1 / 2} C^{-1} A^{1 / 2}\right)^{-1 / 2} A^{1 / 2} C^{-1 / 2} & 0 \\
0 & \left(D^{1 / 2} E^{-1} D^{1 / 2}\right)^{-1 / 2} D^{1 / 2} E^{-1 / 2}
\end{array}\right) \\
& =\left(\begin{array}{cc}
A^{-1 / 2}\left(A \gm \left(A-B D^{-1} B^{\top}\right) C^{-1 / 2}\right. & 0 \\
0 & D^{-1 / 2}\left(D \gm \left(D-B^{\top} A^{-1} B\right) E^{-1 / 2}\right.
\end{array}\right)
\end{aligned}
$$
\end{proof}

\subsection{The Lorentz cone}

Let $m\ge 1$ and $V=\bR\oplus \bR^m$ with Jordan product
$$
(\lambda,u)\circ (\mu,v)=\bigl(\lambda\mu+\inner{u}{v},\ \lambda v+\mu u\bigr), \quad e=(1,0).
$$
Then $V$ is the rank two Euclidean Jordan algebra usually called a \emph{spin factor}, equipped with the inner product\footnote{We fix here the normalization
$
(x\mid y)=\frac12\tr_V(x\circ y),
$
which is one half of the usual trace inner product. }
\begin{equation}\label{inner-spin-factor}
((\lambda, u) \mid(\mu, v))=\lambda \mu+\inner{u}{v}.
\end{equation}
The corresponding symmetric cone is the Lorentz cone
$$
\Omega=\{(\lambda,u)\in\bR\oplus \bR^m \;:\; \lambda>\|u\|\}.
$$
 The trace, determinant, and inverse are
$$
\tr_V(\lambda,u)=2\lambda, \quad \det_V(\lambda,u)=\lambda^2-\|u\|^2,
\qquad
(\lambda,u)^{-1}
=
\frac{1}{\lambda^2-\|u\|^2}(\lambda,-u).
$$
A direct computation from \(P(x)=2L(x)^2-L(x^2)\) gives
\begin{equation}\label{quad-spin}
P(\lambda,u)(\mu,v)
=
\Bigl((\lambda^2+\|u\|^2)\mu+2\lambda\langle u,v\rangle,\
(\lambda^2-\|u\|^2)v+2\lambda\mu\,u+2\langle u,v\rangle u\Bigr).
\end{equation}

We now derive explicit formulas for the metric and spectral geometric means in the spin factor.

For $x=(\lambda,u)$ and $y=(\mu,v)$ in $\Omega$, define
$$\Delta_x:=\det(x)=\lambda^2-\|u\|^2,\qquad
\Delta_y:=\det(y)=\mu^2-\|v\|^2,$$
$$\check{x}:=(\lambda,-u),\qquad \check{y}:=(\mu,-v),$$
and
$$\beta(x,y):=(x\mid \check{y})=\lambda\mu-\langle u,v\rangle.$$
 The linearization of the determinant gives
$$\det(ax+by)=a^2\Delta_x+b^2\Delta_y+2ab\beta(x,y).$$

\begin{proposition}\label{prop:spingm}
 
The metric geometric mean in the Lorentz cone is given by
\begin{equation}\label{gm-spin-factor}
x\gm y
=
\frac{\sqrt{\Delta_y}\,x+\sqrt{\Delta_x}\,y}
{\sqrt{2\bigl(\beta(x,y)+\sqrt{\Delta_x\Delta_y}\bigr)}}. 
\end{equation}
That is,
$$
x\gm y
=
\left(
\frac{\sqrt{\Delta_y}\,\lambda+\sqrt{\Delta_x}\,\mu}
{\sqrt{2\bigl(\lambda\mu-\inner{u}{v}+\sqrt{\Delta_x\Delta_y}\bigr)}},
\frac{\sqrt{\Delta_y}\,u+\sqrt{\Delta_x}\,v}
{\sqrt{2\bigl(\lambda\mu-\inner{u}{v}+\sqrt{\Delta_x\Delta_y}\bigr)}}
\right).
$$
\end{proposition}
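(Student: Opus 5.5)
We prove the formula through the Riccati characterization in Proposition~\ref{prop:metricmean}(1): $x\gm y$ is the unique $z\in\Omega$ with $P(z)x^{-1}=y$. So we set
$$
z:=\frac{\sqrt{\Delta_y}\,x+\sqrt{\Delta_x}\,y}{\sqrt{2\bigl(\beta(x,y)+\sqrt{\Delta_x\Delta_y}\bigr)}},
$$
check that $z\in\Omega$, and verify $P(z)x^{-1}=y$. Membership in $\Omega$ is immediate, since $z$ is a positive combination of $x,y\in\Omega$ and $\Omega$ is a convex cone. The denominator is well defined: by the reverse Cauchy--Schwarz inequality for the Lorentz form, $\beta(x,y)\ge\sqrt{\Delta_x\Delta_y}>0$.

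\textbf{Key identity.} In a rank-two algebra, for invertible $z$ one has $P(z)w=2(z\mid \check w)\,z-\det(w)\,\check z$ with the normalization \eqref{inner-spin-factor}. An equivalent form is
$$
P(z)w=2\,(z\mid w^{\ast})\,z-\det(z)\,w',
$$
and the precise version I will derive directly from \eqref{quad-spin} is
$$
P(z)w=2\beta(z,\check w)\,z-\Delta_z\,\check w .
$$
Indeed, with $z=(\lambda,u)$ and $\check w=(\mu,-v)$, the formula \eqref{quad-spin} gives $(\lambda^2+\|u\|^2)\mu+2\lambda\langle u,v\rangle = 2(\lambda\mu+\langle u,v\rangle)\lambda-(\lambda^2-\|u\|^2)\mu$, and the vector part matches similarly. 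So
$$
P(z)w=2(z\mid w)\,z-\Delta_z\,\check w .
$$
Apply this with $w=x^{-1}=\check x/\Delta_x$. Then $\check w=x/\Delta_x$ and $(z\mid w)=\beta(z,x)/\Delta_x$, so
$$
P(z)x^{-1}=\frac{2\beta(z,x)\,z-\Delta_z\,x}{\Delta_x}.
$$

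\textbf{Computation.} Write $z=c(sx+ty)$ with $s=\sqrt{\Delta_y}$, $t=\sqrt{\Delta_x}$, and $c^{-2}=2(\beta+st)$, where $\beta=\beta(x,y)$. By bilinearity, $\beta(z,x)=c(s\Delta_x+t\beta)=ct(st+\beta)$, using $s t^2=s\Delta_x$. By the determinant linearization, $\Delta_z=c^2(s^2\Delta_x+t^2\Delta_y+2st\beta)=c^2\cdot 2st(st+\beta)=st$. Hence
$$
2\beta(z,x)\,z-\Delta_z x = 2c^2t(st+\beta)(sx+ty)-st\,x = t(sx+ty)-st\,x = t^2 y=\Delta_x\,y,
$$
and dividing by $\Delta_x$ gives $P(z)x^{-1}=y$. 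Uniqueness in Proposition~\ref{prop:metricmean}(1) then yields $z=x\gm y$. The componentwise display follows by substituting $x=(\lambda,u)$ and $y=(\mu,v)$.

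The main obstacle is establishing the rank-two identity for $P(z)w$ cleanly from \eqref{quad-spin}. This is a short but sign-sensitive verification, and the remaining steps are the algebra above. As a consistency check, $\Delta_z=\sqrt{\Delta_x\Delta_y}$ agrees with Proposition~\ref{prop:metricmean}(7).
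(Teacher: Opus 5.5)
Your proposal is correct and follows essentially the paper's route: the Riccati characterization of Proposition~\ref{prop:metricmean}(1), combined with the rank-two identity $P(z)w=2(z\mid w)\,z-\det(z)\,\check w$; your explicit algebra fills in the paper's ``short calculation,'' and your reverse Cauchy--Schwarz check that $z\in\Omega$ (which the uniqueness argument needs) is a step the paper leaves implicit. One correction: delete the first two versions of the identity you wrote down — the one ending in $\det(w)\,\check z$ is false (it is not even linear in $w$), and the one with $w^{\ast}$, $w'$ uses undefined symbols — whereas the version you actually verify from \eqref{quad-spin} and then use is the correct one.
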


\begin{proof}
One checks directly from \eqref{quad-spin} that
$$
P(z)w=2(z\mid w)z-\det(z)\check{w}
$$
for all $z,w\in V$. Set
$$
N:=\sqrt{\Delta_y}\,x+\sqrt{\Delta_x}\,y,
\qquad
\delta:=2\bigl(\beta(x,y)+\sqrt{\Delta_x\Delta_y}\bigr),
\qquad
z:=\frac{N}{\sqrt{\delta}}.
$$
A short calculation gives
$$
\det(z)=\sqrt{\Delta_x\Delta_y},
\qquad
(z\mid x^{-1})=\frac{\sqrt{\delta}}{2\sqrt{\Delta_x}}.
$$
Since $x^{-1}=\check{x}/\Delta_x$, it follows that
$$
P(z)x^{-1}=2(z\mid x^{-1})z-\det(z)\check{x^{-1}}=y.
$$
By Proposition~\ref{prop:metricmean}(1), $z=x\gm y$.
\end{proof}

When $\det(x)=\det(y)=1$, one has
$$
x\gm y=\frac{x+y}{\sqrt{\det(x+y)}}.
$$
This yields the following alternative expression for the metric geometric mean,
\begin{align*}
x\gm y
&=(\Delta_x\Delta_y)^{1/4}
\frac{\Delta_x^{-1/2}x+\Delta_y^{-1/2}y}
{\sqrt{\det\bigl(\Delta_x^{-1/2}x+\Delta_y^{-1/2}y\bigr)}}\\
&=(\Delta_x\Delta_y)^{1/4}
\frac{\sqrt{\Delta_y}\,x+\sqrt{\Delta_x}\,y}
{\sqrt{\det\bigl(\sqrt{\Delta_y}\,x+\sqrt{\Delta_x}\,y\bigr)}}.
\end{align*}

For $x=(\lambda,u)$ and $y=(\mu,v)$ in $\Omega$, set
$$
a:=\Delta_x,
\qquad
b:=\Delta_y,
\qquad
\gamma:=\sqrt{\frac{\lambda\mu+\inner{u}{v}+\sqrt{\Delta_x\Delta_y}}{2}}.
$$

\begin{proposition}\label{prop:spinsgm}
The spectral geometric mean in the Lorentz cone  is given by
$$
x\sgm y
=
\left(
\gamma,
\frac{(ab)^{1/4}(\mu u+\lambda v)+\gamma(\sqrt{b}\,u+\sqrt{a}\,v)}
{\lambda\sqrt{b}+\mu\sqrt{a}+2\gamma (ab)^{1/4}}
\right).
$$
\end{proposition}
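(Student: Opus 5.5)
The plan is to compute $x\sgm y=P(w)x$ directly from the definition, where $w:=(x^{-1}\gm y)^{1/2}$. All three ingredients have closed forms in a rank-two spin factor: the metric mean from Proposition~\ref{prop:spingm}, square roots, and the identity
$$P(z)w'=2(z\mid w')z-\det(z)\check{w'},$$
established in the proof of Proposition~\ref{prop:spingm}. The work is to assemble them and simplify.

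\emph{Step 1: compute $c:=x^{-1}\gm y$.} Put $a=\Delta_x$ and $b=\Delta_y$. Apply Proposition~\ref{prop:spingm} to $x^{-1}=\check x/a$, which has $\det(x^{-1})=1/a$. Here
$$\beta(x^{-1},y)=(x^{-1}\mid\check y)=\frac{\lambda\mu+\inner{u}{v}}{a},$$
so the quantity under the square root becomes
$$2\Bigl(\frac{\lambda\mu+\inner{u}{v}}{a}+\sqrt{b/a}\Bigr)=\frac{4\gamma^2}{a}.$$
This is exactly where $\gamma$ enters. One gets
$$c=\frac{\sqrt b\,\check x+\sqrt a\,y}{2\gamma\sqrt a},\qquad \det c=\sqrt{b/a},$$
and the scalar part of $c$ is $c_0=(\lambda\sqrt b+\mu\sqrt a)/(2\gamma\sqrt a)$.

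\emph{Step 2: take the square root.} In a spin factor every element satisfies $c^2=\tr(c)\,c-\det(c)\,e$. With $s=\sqrt{\det c}$ this gives $(c+se)^2=2(c_0+s)(c+se)$. Hence
$$w=c^{1/2}=\frac{c+s e}{\sqrt{2(c_0+s)}},\qquad s=(b/a)^{1/4},\qquad \det w=s.$$
The element $c+se$ lies in $\Omega$, so this is the positive root. Multiplying through by $2\gamma\sqrt a$, the quantity $2(c_0+s)$ is proportional to $\lambda\sqrt b+\mu\sqrt a+2\gamma(ab)^{1/4}$. This is the denominator in the claimed formula, which is a good consistency check.

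\emph{Step 3: apply $P(w)$ and simplify.} Evaluate
$$x\sgm y=P(w)x=2(w\mid x)\,w-s\,\check x.$$
The pairing $(w\mid x)$ involves $(\check x\mid x)=\lambda^2-\|u\|^2=a$, $(y\mid x)=\lambda\mu+\inner{u}{v}=2\gamma^2-\sqrt{ab}$, and $(e\mid x)=\lambda$. Substituting these, the scalar and vector components become rational expressions in $\lambda,\mu,u,v,\gamma$ and $(ab)^{1/4}$.

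\emph{Main obstacle.} I expect the hard part to be reducing these expressions to the stated form, especially showing that the scalar component collapses to $\gamma$. This requires repeatedly using $\gamma^2=(\lambda\mu+\inner{u}{v}+\sqrt{ab})/2$ to cancel a factor of the denominator $\lambda\sqrt b+\mu\sqrt a+2\gamma(ab)^{1/4}$.

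If the direct simplification becomes unwieldy, I would instead verify the candidate $z$ given by the statement. The checks are $\det z=\sqrt{ab}$ (by Proposition~\ref{prop:spectralprops}(8)) and $P(w^{-1})z=x$ (Proposition~\ref{prop:spectralprops}(5)). Uniqueness in Proposition~\ref{prop:spectralprops}(5), or in Proposition~\ref{prop:spectralchar}, then identifies $z$ with $x\sgm y$. Verifying the scalar component alone is a quick sanity check that can be done first.
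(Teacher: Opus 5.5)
Your proposal is correct and is essentially the paper's proof. Both compute $q=x^{-1}\gm y$ via Proposition~\ref{prop:spingm}, take the spin-factor square root, and expand $P(q^{1/2})x$; you use the equivalent identity $P(z)w=2(z\mid w)z-\det(z)\check w$ where the paper uses \eqref{quad-spin}, and the simplification you worry about is short because $(c\mid x)=\gamma$, which immediately makes the scalar part equal to $\gamma$. One slip in Step 2: the identity should read $(c+se)^2=2(c_0+s)\,c$, not $2(c_0+s)(c+se)$, and that corrected identity is exactly what yields your (correct) formula $w=(c+se)/\sqrt{2(c_0+s)}$.
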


\begin{proof}
Set $q:=x^{-1}\gm y$. By Proposition~\ref{prop:spingm},
$$
q
=
\frac{1}{2\sqrt{a}\,\gamma}
\Bigl(\sqrt{b}\,\lambda+\sqrt{a}\,\mu,\ -\sqrt{b}\,u+\sqrt{a}\,v\Bigr).
$$
The square root formula in the Lorentz cone is
$$
(\alpha,w)^{1/2}
=
\left(
\sqrt{\frac{\alpha+\sqrt{\alpha^2-\|w\|^2}}{2}},
\frac{w}{\sqrt{2(\alpha+\sqrt{\alpha^2-\|w\|^2})}}
\right).
$$
Since $\det(q)=\sqrt{b/a}$, this gives
$$
q^{1/2}
=
\left(
\frac{\sqrt{d}}{2a^{1/4}\sqrt{\gamma}},
\frac{-\sqrt{b}\,u+\sqrt{a}\,v}{2a^{1/4}\sqrt{\gamma}\sqrt{d}}
\right),
$$
where
$$
d:=\lambda\sqrt{b}+\mu\sqrt{a}+2\gamma (ab)^{1/4}.
$$
Substituting this into the definition
$$
x\sgm y=P(q^{1/2})x
$$
and simplifying with \eqref{quad-spin} yields the asserted formula.
\end{proof}

Fix a unit vector $u_0\in \bR^m$, and set
$$
c:=\frac12(1,u_0),
\qquad
w:=2c-e=(0,u_0).
$$
Then $c$ is a primitive idempotent, $w^2=e$, and $\sigma:=P(w)$ is a special involution. From \eqref{quad-spin},
$$
\sigma(\mu,v)=P(0,u_0)(\mu,v)=\bigl(\mu,\ -v+2\inner{u_0}{v}u_0\bigr).
$$
Thus $\sigma=(I_{\mathbb R},g)$, where $g$ acts on $\mathbb R^m$ as the
orthogonal reflection across the line $\mathbb R u_0$.

Decompose $v\in\mathbb R^m=\mathbb R u_0\oplus u_0^\perp$ as $v=v_\parallel+v_\perp$, where
$$
v_\parallel=\langle u_0,v\rangle u_0,
\qquad
v_\perp\perp u_0.
$$
Then
$$
\sigma(\mu,v)=(\mu,v_\parallel-v_\perp).
$$
Hence
$$
V^+=\{(\lambda,t\,u_0): \lambda,t\in \bR\},
\qquad
V^-=\{(0,v): v\perp u_0\}.
$$

The Jordan exponential is
$$
\exp(\lambda,u)=\bigl(e^\lambda \cosh r,\ e^\lambda \tfrac{\sinh r}{r}u\bigr),
\qquad
r:=\|u\|,
$$
with the convention $\frac{\sinh r}{r}=1$ at $r=0$. Therefore
$$
\Omega_\sigma^+
=
\{(\alpha,\beta u_0): \alpha>|\beta|\},
$$
which is a two-dimensional Lorentz cone {\color{blue}in} $\mathbb R e\oplus\mathbb R u_0$.  Under the  isomorphism
$$
(\alpha,\beta u_0)\longmapsto(\alpha+\beta,\alpha-\beta),
$$
it is identified with the symmetric space
$$
(\mathbb R_{>0})^2
\simeq
\left(\GL^+(1,\mathbb R)/\SO(1)\right)^2.
$$
On the other hand,
$$
\Omega_\sigma^-
=
\{(\alpha,v): \alpha>0,\ v\perp u_0,\ \alpha^2-\|v\|^2=1\}.
$$
Thus $\Omega_\sigma^-$ is the upper sheet of a two-sheeted hyperboloid; in particular, it is the real hyperbolic space $\SO_0(1,m-1)/\SO(m-1)$.

Now let $x=(\lambda,u)\in\Omega$ and decompose
$$
u=\xi u_0+v,
\qquad
\xi:=\inner{u}{u_0},
\qquad
v\perp u_0.
$$
Set
$$
\Delta:=\det x=\lambda^2-\xi^2-\|v\|^2,
\qquad
S:=\lambda^2-\xi^2.
$$
Define
$$
a:=\left(\frac{\sqrt{\Delta}\,\lambda}{\sqrt{S}},\ \frac{\sqrt{\Delta}\,\xi}{\sqrt{S}}\,u_0\right),
\qquad
b:=\left(\frac{\sqrt{S}}{\sqrt{\Delta}},\ \frac{v}{\sqrt{\Delta}}\right).
$$

\begin{proposition}\label{prop:spinfactorization}
One has $a\in \Omega_\sigma^+$, $b\in \Omega_\sigma^-$, and
$$
x=P(a^{1/2})\,b.
$$
Consequently,
$$
x\gm \sigma(x)=\left(\frac{\sqrt{\Delta}\,\lambda}{\sqrt{S}},\ \frac{\sqrt{\Delta}\,\xi}{\sqrt{S}}\,u_0\right),
\qquad
x\sgm \sigma(x^{-1})=\left(\frac{\sqrt{S}}{\sqrt{\Delta}},\ \frac{v}{\sqrt{\Delta}}\right).
$$
\end{proposition}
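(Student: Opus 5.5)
The plan is to verify the three claims directly and then read off the two means from the uniqueness part of Theorem~\ref{thm:factorization}. To avoid computing $a^{1/2}$ explicitly, I will only use the algebraic relations that $a^{1/2}$ must satisfy.

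\emph{Step 1: membership.} Since $x\in\Omega$ we have $\lambda>\|u\|\ge|\xi|$, so $S=\lambda^2-\xi^2\ge\Delta>0$ and all square roots are well defined.
\begin{itemize}
\item For $a$: its vector part lies in $\bR u_0$, so $\sigma(a)=a$. Moreover $\lambda>|\xi|$ gives $a\in\Omega$, and $\det a=\Delta(\lambda^2-\xi^2)/S=\Delta$. Hence $a\in\Omega_\sigma^+$.
\item For $b$: the first component is positive and $\det b=(S-\|v\|^2)/\Delta=\Delta/\Delta=1$, so $b\in\Omega$. Since $v\perp u_0$, the formula for $\sigma$ gives $\sigma(b)=(\sqrt S/\sqrt\Delta,-v/\sqrt\Delta)$. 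By the inverse formula with $\det b=1$, this equals $b^{-1}$. Hence $b\in\Omega_\sigma^-$.
\end{itemize}

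\emph{Step 2: the identity $x=P(a^{1/2})b$.} Since $a$ lies in the subalgebra $\bR e\oplus\bR u_0$, so does $a^{1/2}$; write $a^{1/2}=(\alpha,\beta u_0)$. Squaring gives $a=(\alpha^2+\beta^2,\,2\alpha\beta\,u_0)$, and taking determinants gives $\alpha^2-\beta^2=\sqrt{\det a}=\sqrt\Delta$. Apply \eqref{quad-spin} to $(\mu,v')$ with $v'\perp u_0$; the term $\inner{\beta u_0}{v'}$ vanishes, so
$$
P(\alpha,\beta u_0)(\mu,v')=\bigl((\alpha^2+\beta^2)\mu,\ (\alpha^2-\beta^2)v'+2\alpha\beta\mu\,u_0\bigr).
$$
Take $(\mu,v')=b$. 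The first component is $\frac{\sqrt\Delta\lambda}{\sqrt S}\cdot\frac{\sqrt S}{\sqrt\Delta}=\lambda$. The vector component is $\sqrt\Delta\cdot\frac{v}{\sqrt\Delta}+\frac{\sqrt\Delta\xi}{\sqrt S}\cdot\frac{\sqrt S}{\sqrt\Delta}u_0=v+\xi u_0=u$. Hence $P(a^{1/2})b=x$.

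\emph{Step 3: the consequence.} By Corollary~\ref{cor:mostow} the factorization is unique. By \eqref{eq:inv-form}, the two factors are therefore $x\gm\sigma(x)=a$ and $x\sgm\sigma(x^{-1})=b$.

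I expect no real obstacle. The only delicate point is Step 2, where one must avoid the explicit Lorentz-cone square-root formula. Expressing $\alpha^2+\beta^2$, $2\alpha\beta$ and $\alpha^2-\beta^2$ through $a$ and $\det a$ makes the computation immediate.
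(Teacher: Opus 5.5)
Your proof is correct and follows essentially the same route as the paper. Both check membership of $a$ and $b$ directly, then write $a^{1/2}=(r,su_0)$ and use $r^2+s^2$, $2rs$ and $r^2-s^2=\sqrt{\det a}=\sqrt{\Delta}$ in \eqref{quad-spin} to obtain $P(a^{1/2})b=x$, and finally read off the two means from the uniqueness in Theorem~\ref{thm:factorization}. Your justifications that $a^{1/2}$ lies in $\bR e\oplus\bR u_0$ and that $\sigma(b)=b^{-1}$ (via the inverse formula) make explicit what the paper leaves implicit.
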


\begin{proof}
Since $\lambda>\|u\|=\sqrt{\xi^2+\|v\|^2}\ge |\xi|$, the point $a$ belongs to $\Omega_\sigma^+$. Also, the vector part of $b$ is orthogonal to $u_0$ and
$$
\left(\frac{\sqrt{S}}{\sqrt{\Delta}}\right)^2-\left\|\frac{v}{\sqrt{\Delta}}\right\|^2
=
\frac{S-\|v\|^2}{\Delta}
=
1,
$$
so $b\in \Omega_\sigma^-$.

Write
$$
a^{1/2}=(r,su_0).
$$
Then
$
(r,su_0)^2=(r^2+s^2,\ 2rs\,u_0),
$
hence
$$
r^2+s^2=\frac{\sqrt{\Delta}\,\lambda}{\sqrt{S}},
\qquad
2rs=\frac{\sqrt{\Delta}\,\xi}{\sqrt{S}},
\qquad
r^2-s^2=\sqrt{\det(a)}=\sqrt{\Delta}.
$$
Using \eqref{quad-spin} and $v\perp u_0$, we get
$$
P(r,su_0)(\mu,v)=\bigl((r^2+s^2)\mu,\ (r^2-s^2)v+2rs\,\mu\,u_0\bigr).
$$
Applying this to $b$ gives
$$
P(a^{1/2})b
=
\left(
\frac{\sqrt{\Delta}\,\lambda}{\sqrt{S}}\cdot \frac{\sqrt{S}}{\sqrt{\Delta}},
\ \sqrt{\Delta}\cdot \frac{v}{\sqrt{\Delta}}+\frac{\sqrt{\Delta}\,\xi}{\sqrt{S}}\cdot \frac{\sqrt{S}}{\sqrt{\Delta}}\,u_0
\right)
=
(\lambda,\xi u_0+v)
=
x.
$$
The final formulas follow from Theorem~\ref{thm:factorization}.
\end{proof}

\section*{Acknowledgments}
K. Koufany gratefully acknowledges support from ``Lorraine Université d'Excellence,'' part of the France 2030 program (ANR-15-IDEX-04-LUE). The work of Y. Lim was supported by a National Research Foundation of Korea (NRF) grant funded by the Korean government (MSIT), grant no.~RS-2025-14842993.  The authors also acknowledge the support of the South Korea-France research cooperation PHC STAR program.

\end{document}